\documentclass{amsart}
\usepackage{amssymb,amsmath,amsrefs}
\usepackage[colorlinks=true]{hyperref}
\hypersetup{urlcolor=blue, citecolor=red}
\usepackage[dvips]{graphicx}
\usepackage{color}

\usepackage{tabularx}
\bibliographystyle{amsplain}			

\theoremstyle{plain}

\newtheorem{mainthm}{Theorem}

\newtheorem*{conj*}{Conjecture}
\newtheorem*{cor*}{Corollary}
\newtheorem{theorem}{Theorem}[section]
\newtheorem{thm}[theorem]{Theorem}

\newtheorem{proposition}[theorem]{Proposition}
\newtheorem{corollary}[theorem]{Corollary}
\newtheorem{lemma}[theorem]{Lemma}

\newtheorem{claim}{Claim}

\theoremstyle{definition}
\newtheorem*{def*}{Definition}
\newtheorem{remark}[theorem]{Remark}
\newtheorem{rmk}[theorem]{Remark}

\newtheorem{definition}[theorem]{Definition}

\renewcommand{\SS}{{\mathcal S}}

\newcommand{\SU}{{\mathcal U}}

\newcommand{\de} {\delta}

\renewcommand{\epsilon}{\varepsilon}

\newcommand{\Z}{\mathbb{Z}}

\newcommand{\R}{\mathbb{R}}
\newcommand{\eps}{\varepsilon}

%\newcommand{\SN}{{\cal N}}
%newcommand{\cqd}{\rule{10pt}{10pt}}

\newcommand{\dist}{\operatorname{\textit{d}}}

\newcommand{\diam}{\operatorname{diam}}		
\DeclareMathOperator{\orb}{orb}
\makeatletter
\newcommand{\tpitchfork}{%
  \vbox{
    \baselineskip\z@skip
    \lineskip-.52ex
    \lineskiplimit\maxdimen
    \m@th
    \ialign{##\crcr\hidewidth\smash{$-$}\hidewidth\crcr$\pitchfork$\crcr}
  }%
}
\makeatother

\sloppy
\hyphenpenalty=100000

\clubpenalty=10000
\widowpenalty=10000
\displaywidowpenalty=10000

\title{Minimal Expansive Flows}

\author[A. Artigue]{Alfonso Artigue}
\address[A. Artigue]{Universidad de la República, CENUR Litoral Norte, DMEL, Paysandú, Uruguay.}
\email{aartigue@litoralnorte.udelar.edu.uy}

\author[E.Rego]{Elias Rego}
\address[E. Rego]{AGH University of Krakow, Faculty of Applied Mathematics, Krak\'ow, Poland.}
\email{rego@agh.edu.pl}

 \subjclass[2020]{Primary: 37B05, Secondary: 37B20.}

\keywords{Expansivity, Minimality, Dimension.}

\begin{document}

\title{Expansive minimal flows}

\begin{abstract}
We extend a Ma\~n\'e's famous result on expansive homeomorphisms, originally presented in \cite{Ma}, to the setting of flows. Specifically, we provide a complete characterization of minimal expansive flows on compact metric spaces. We prove that such flows must be defined on one-dimensional sets and are equivalent to the suspension of a minimal subshift. This result significantly improves upon \cite{KS} by eliminating the need for their additional hypothesis. Furthermore, we apply our findings to show that any regular expansive flow on a compact metric space of dimension two or higher must contain infinitely many minimal subsets.   
\end{abstract}
\maketitle

\section{Introduction}
The concept of expansiveness is a cornerstone of dynamical systems theory. Firstly introduced in \cite{U}, it was later shown that this property naturally arises in hyperbolic systems. In fact, the relationship between expansiveness and hyperbolicity runs deep: together with the shadowing property, expansiveness forms the topological foundation of hyperbolic systems. Beyond its connection to hyperbolic theory, expansive systems are of intrinsic interest in their own right. They are defined independently of any differentiability assumptions and give rise to a robust theory with numerous significant implications. For example, expansiveness is a key tool in the study of stability phenomena. Moreover, expansive systems are frequently regarded as a rich source of chaotic behavior.

Since its inception, the theory of expansive homeomorphisms has undergone significant development, evolving into a rich framework that provides powerful tools for analyzing the dynamics of discrete-time systems. This theory has been widely generalized to various forms of expansivity, both from a topological and measurable perspective. For comprehensive expositions on expansive homeomorphisms and their generalizations, the reader is referred to \cite{AH} and \cite{MoSi}.

Beyond the realm of homeomorphisms, the concept of expansiveness was extended to flows in \cite{BW}. Initially aimed at capturing the expansive behavior of Axiom A flows, expansiveness has proven to be a crucial tool for understanding the chaotic dynamics of regular flows. Due to its broad applicability, expansive flows have been extensively used to study hyperbolic-like systems, as well as other types of flows, such as geodesic flows, which naturally emerge in the study of Riemannian manifolds (see, for example, \cite{CP} and \cite{Pa}).

However, despite the extensive theory available for expansive homeomorphisms, there remains a noticeable gap in results for expansive flows. Although some extensions have been made, many fundamental results available for expansive discrete-time systems are still lacking in the context of flows. There are two main distinctions between the homeomorphism and the flow setting:

\begin{enumerate}
    \item On flows, expansiveness cannot be perceived within small orbit arcs.
    \item We need to consider time changes when dealing with flows. 
 \end{enumerate}   
    
    These differences impose certain limitations when attempting to apply homeomorphism techniques to the flow setting, making the extension of many results particularly challenging. In this work, we aim to extend a result by Ma\~n\'e in \cite{Ma}, which relates minimality and expansiveness for homeomorphisms, to the context of flows.
    
    In what follows, we precisely state our main results. Throughout this work $(X,\dist)$ denotes a compact metric space.
\begin{definition}
    A continuous flow on $X$ is a continuous map  $\phi\colon \R\times X\to X$ satisfying 
    \begin{enumerate}
        \item $\phi(0,x)=x$, for every $x\in X$. 
        \item $\phi(t+s,x)=\phi(s,\phi(t,s))$, for every $x\in X$ and $t,s\in \R$.
        \end{enumerate}
\end{definition}
 We denote by $\phi_t$ the map $\phi(t,\cdot)$. A point $x\in X$ is a \textit{fixed} point for $\phi$ if $\phi_t(x)=x$, for every $t\in \R$. We say that $\phi$ is \textit{regular} if there are not fixed point for $\phi$. A compact subset $K\subset M$ is said to be $\phi$-\textit{invariant} if $\phi_t(K)=K$, for every $t\in \R$. A compact and invariant subset is \textit{minimal} if its only proper, compact and invariant subset is the empty set. 
\begin{definition}
A flow $\phi$ on $X$ is said to be minimal if $X$ is a minimal set.    
\end{definition}
Observe that the trivial flow, i.e., when $X$ is a singleton, is trivially minimal. Also, every non-trivial minimal flow is regular.  Next, we introduce the main subject of this work. But first, let us denote by $Rep$ the set of increasing homeomorphisms $h:\R\to\R$ such that $h(0)=0$.
 
\begin{definition}\label{Def: exp}
    A flow $\phi$ is expansive if for every $\eps>0$, there is $\delta>0$ such that if $x,y\in X$ and  $h\in Rep$  satisfy 
    $$d(X_t(x),X_{h(t)}(y))\leq \delta,$$
    for every $t\in \R$, then $y\in \phi_{[-\eps,\eps]}(x)$. 
\end{definition}

\begin{rmk}
    Expansiveness for flows was initially introduced in \cite{BW}, but not in the previous form. Nevertheless, it is shown in the same paper that the previous definition and the original one coincide.
\end{rmk}

The relation between minimality and expansivity was originally explored in \cite{Ma}, where Ma\~n\'e proved that, for homeomorphisms, these properties are not compatible, unless $X$ is a totally disconnected set. Precisely:

\begin{theorem}[\cite{Ma}]
    Let $f$ be an expansive homeomorphism of a compact metric space $X$. If $f$ is minimal, then $X$ has null topological dimension. 
\end{theorem}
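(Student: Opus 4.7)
The plan is to argue by contradiction: suppose $\dim X \geq 1$, so dimension theory provides a non-degenerate subcontinuum $C_0 \subset X$. Fix an expansivity constant $\eps > 0$ for $f$.

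The first step is Ma\~n\'e's \emph{continuum-expansion lemma}: for every $\delta > 0$ there exists $N = N(\delta) \in \N$ such that every subcontinuum $C \subset X$ with $\diam(C) \geq \delta$ satisfies $\diam(f^n(C)) > \eps$ for some $|n| \leq N$. The argument is a compactness exercise in the hyperspace of subcontinua of $X$: a hypothetical sequence of counterexamples $C_k$ would converge, along a subsequence, to a non-degenerate continuum $C_\infty$ for which $\diam(f^n(C_\infty)) \leq \eps$ holds for every $n \in \Z$, and any two distinct points of $C_\infty$ would then violate expansiveness.

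The second step produces a non-degenerate continuum $D$ whose iterates collapse in one time direction, say $\diam(f^n(D)) \to 0$ as $n \to +\infty$ (otherwise replace $f$ by $f^{-1}$). Starting from $C_0$, one selects a nested sequence of sub-continua with progressively larger $\eps$-escape time; the continuum-expansion lemma forces their diameters to shrink, and a nested-intersection argument yields the desired $D$ inside the generalized stable set of each of its points.

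The final step uses minimality to extract a contradiction. Consider, for $r > 0$ and $N \in \N$, the subset
\begin{equation*}
K_{r,N} = \{x \in X : \exists \text{ continuum } C \ni x \text{ with } \diam(C) \geq r \text{ and } \diam(f^n(C)) \leq 1/N \text{ for all } n \geq N\},
\end{equation*}
which is closed by a Hausdorff-limit argument. Step~2 yields nonemptyness for some pair $r, N_0$, and $f$-invariance of $\bigcup_N K_{r,N}$ combined with minimality forces every point of $X$ to carry a non-degenerate forward-stable continuum. A symmetric analysis at a point chosen along a recurrent orbit provides a non-degenerate backward-stable continuum through the same base point; a careful limit argument then locates a second point in the intersection of the two continua, yielding distinct orbits staying within $\eps$ for all $n \in \Z$ and contradicting expansiveness. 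The main obstacle is precisely this coordination step: aligning the forward- and backward-stable pictures at a common base point, and producing a second intersection, requires delicate geometric control on the hyperspace of subcontinua together with a sharp use of minimality, and that is where the bulk of the technical work will lie.
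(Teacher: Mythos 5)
Your Step 1 (the hyperspace compactness lemma: continua of diameter $\geq\delta$ must blow up past the expansivity constant $\varepsilon$ within a uniformly bounded number of iterates) is correct and standard. The first genuine gap is in Step 2, and it is visible in your own wording: you say the continuum-expansion lemma ``forces the diameters to shrink'' along your nested sequence, and then you take a nested intersection to get a \emph{non-degenerate} continuum $D$. These two things are incompatible. For nested compacta the diameter of the intersection is the limit of the diameters, so if the diameters shrink to zero your $D$ is a singleton and carries no information; if instead they are to stay bounded below, you must prove that subcontinua of a definite diameter with arbitrarily long $\varepsilon$-escape time exist --- and that is precisely the content of Ma\~n\'e's key lemma, nowhere established in your sketch. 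The known proofs (Ma\~n\'e, and Kato \cite{Kato93} in the continuum-wise expansive setting) obtain the stable or unstable continuum differently: one extracts, inside far forward or backward iterates of $C_0$, subcontinua whose diameter is bounded below by a fixed constant (via connectedness and a ``first time the diameter exceeds $\varepsilon$'' argument) while longer and longer blocks of iterates stay $\varepsilon$-small, and then passes to a Hausdorff limit in the hyperspace; this is not a nested-intersection argument.

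Step 3 has two further gaps. First, $\bigcup_N K_{r,N}$ is not obviously $f$-invariant: if $C$ witnesses $x\in K_{r,N}$, then $f(C)$ witnesses the stability condition at $f(x)$, but its diameter may fall below $r$, so minimality cannot be applied with a fixed $r$ without a uniformity statement of the type of Lemma \ref{futuretimes} (borrowed here from \cite{ACP}, after Kato), which you never prove in the discrete setting. Second, and more seriously, your final contradiction has no mechanism: two continua through a common base point have no reason whatsoever to meet in a second point, and if the forward- and backward-stable continua really sit inside $W^s_\varepsilon(x)$ and $W^u_\varepsilon(x)$, then expansiveness itself forces their intersection to be exactly $\{x\}$, so the ``second intersection point'' is exactly the kind of object you cannot manufacture; you concede this is where the bulk of the work lies, which means the proof does not close. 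The classical arguments conclude differently: from a non-degenerate stable continuum (whose forward iterates shrink uniformly) one plays the dynamics against minimality itself --- producing a proper closed invariant set, periodic behaviour in an $\omega$-limit set, or, in the language of the present paper, a spiral point whose $\omega$-limit is a periodic orbit --- rather than trying to contradict expansiveness by intersecting stable and unstable continua. Note also that this statement is quoted from \cite{Ma} and not reproved in the paper; the paper's own flow analogue (Theorem \ref{minexp}) follows the spiral-point/periodic-orbit route combined with the Keynes--Sears dimension argument, a genuinely different scheme from the one you propose.
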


The only known attempt to transpose Ma\~n\'e's result to the setting of flows was made in \cite{KS}. In their work, Keynes and Sears provided a partial extension of Ma\~n\'e's result to expansive flows, but with an additional assumption: the non-existence of spiral points (see Section \ref{Sec2} for the precise definition).

\begin{theorem}[\cite{KS}]\label{thmKS}
    Let $\phi$ be an expansive flow on a compact metric space $X$. If $\phi$ is minimal and has no spiral points, then $X$ has topological dimension one. 
\end{theorem}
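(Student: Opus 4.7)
The plan is to reduce Theorem \ref{thmKS} to Ma\~n\'e's theorem for homeomorphisms by passing from $\phi$ to a first-return map on a suitable cross-section. Since $\phi$ is a non-trivial minimal flow it is regular, and the absence of spiral points (by the definition we will import from \cite{KS}) is exactly what is needed so that local transversals carry a well-behaved return map. The target is to produce a cross-section $\Sigma$ which is both compact and carries an expansive minimal homeomorphism; by Ma\~n\'e, this forces $\dim \Sigma = 0$, and then the flow-box structure promotes this to $\dim X = 1$.

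The first step is the construction of $\Sigma$. I would pick a point $p \in X$, a small flow-box neighbourhood, and a closed local transversal $\Sigma_0 \ni p$ to the flow. Because $p$ is not a spiral point, one can arrange $\Sigma_0$ so that the return times of $\phi$-orbits to $\Sigma_0$ are uniformly bounded above and below on a neighborhood of $\Sigma_0$ in $\Sigma_0$. Minimality of $\phi$ and compactness of $X$ then force the forward $\phi$-orbit of every point of $X$ to meet $\Sigma_0$ infinitely often with return times in a fixed interval $[\tau_1,\tau_2]\subset(0,\infty)$. This yields a continuous first-return map $T:\Sigma\to\Sigma$ on a compact invariant piece $\Sigma\subset\Sigma_0$, and minimality of $\phi$ transfers to minimality of $T$: any $T$-invariant closed proper subset $F\subsetneq\Sigma$ would generate, by saturating with short flow arcs, a $\phi$-invariant closed proper subset of $X$.

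The core step, and the one I expect to be the main obstacle, is to transfer expansivity from $\phi$ to $T$. The delicate point is that Definition \ref{Def: exp} allows an unrestricted reparameterization $h\in Rep$, whereas expansivity of $T$ must compare synchronous iterates $T^n(x)$ and $T^n(y)$. Given $\eps>0$ small enough to be subordinate to a Bowen--Walters expansivity constant, the idea is: if the $T$-orbits of $x,y\in\Sigma$ stay $\delta$-close for all $n\in\Z$, then by comparing the $n$-th returns I can build a piecewise linear reparameterization $h$ matching the two hitting sequences. The no-spiral-points hypothesis guarantees that this $h$ lies in $Rep$ and that the two full flow orbits stay within a controlled distance of each other (via the flow-box comparison in $\Sigma\times[0,\tau_2]$). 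Expansivity of $\phi$ then yields $y\in\phi_{[-\eps,\eps]}(x)$; choosing $\eps$ much smaller than $\tau_1$ forces $y=x$ because $\Sigma$ meets each orbit discretely. Hence $T$ is expansive.

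Once $T$ is both minimal and expansive on the compact metric space $\Sigma$, Ma\~n\'e's theorem gives $\dim\Sigma=0$. For the final step, using the uniform upper bound on return times and the local flow-box homeomorphism $\Sigma\times[0,\tau_2]\to \phi_{[0,\tau_2]}(\Sigma)$, the space $X$ is covered by finitely many flow boxes of the form (zero-dimensional)$\times$(interval), so $\dim X\leq 1$. The reverse inequality $\dim X\geq 1$ is immediate since $X$ contains non-trivial orbits. This yields $\dim X=1$, as claimed.
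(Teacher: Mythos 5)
You should note first that the paper does not prove Theorem \ref{thmKS} at all: it is quoted from \cite{KS}, and the machinery the paper does develop (adapted families of cross-sections in Section \ref{SecMinSets}, catenary metrics in Section \ref{SecSpiral}) indicates what a correct argument has to look like. Measured against that, your reduction to Ma\~n\'e's theorem via a single first-return homeomorphism has a genuine gap at its foundation. The object you need --- a compact set $\Sigma$ inside one local transversal, invariant under a \emph{continuous} first-return map $T$ --- does not exist in general: a minimal flow need not admit a global cross-section, and the first-return map to a single closed local section is not continuous, because orbits of nearby points can graze the boundary of the section and return at wildly different times (the return-time function is only semicontinuous). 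For the same reason your interpolation step fails as stated: even if $d(T^n(x),T^n(y))\leq\delta$ for all $n$, the consecutive return durations of $x$ and $y$ need not be close, so the piecewise-linear $h$ you build does not keep $\phi_t(x)$ and $\phi_{h(t)}(y)$ uniformly close between returns. This boundary problem is exactly why \cite{KS} (and Section \ref{SecMinSets} here) work with a finite family of adapted pairs $T_i\subset H_i$ of cross-sections and a discretized map $\Phi$ on the union, which is \emph{not} a homeomorphism of a compact metric space; consequently Ma\~n\'e's theorem cannot be invoked as a black box, and one must re-run a Ma\~n\'e-type dimension argument in that relative setting, which is the actual content of \cite{KS}*{Theorem 3.6}.

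Second, your appeal to the no-spiral-points hypothesis is misplaced: monotonicity of $h$ and the upper and lower bounds on return times follow from the section property together with minimality and compactness, and have nothing to do with spiral points. As written, your sketch never genuinely uses the hypothesis, which should be a warning sign --- if the hypothesis could be bypassed by a soft reduction to Ma\~n\'e, the question answered by Theorem \ref{minexp} would not have been open since 1981, and the paper would not need the catenary sectional metric and Theorem \ref{spiral} (spiral points force periodic orbits) to remove it. The hypothesis is needed precisely in the Ma\~n\'e-type argument on the family of sections: flow expansivity only controls orbits up to reparametrization, and a spiral point is the mechanism by which an orbit asymptotically shadows a time-shifted copy of itself with unbounded shift, destroying the correspondence between the flow and the synchronous, section-indexed dynamics that your reduction takes for granted.
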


The issue of spiral points arises exclusively in the context of expansive flows due to the need for considering reparametrizations. Overcoming this obstacle is non-trivial, but in \cite{KS}, the authors managed to eliminate the spiral points condition for Axiom A flows, a much stronger setting than expansiveness (within the non-wandering set). This naturally led us to the following question:

\vspace{0.1in}
\textit{\textbf{Question:} Is it possible to obtain Theorem \ref{thmKS} without assuming the no spiral points condition?}

\vspace{0.1in}

This question has remained open since 1981. In our first main result, we provide a positive answer to it. 

\begin{mainthm}\label{minexp}
    If $\phi$ is a non-trivial minimal expansive flow on $X$, then the topological dimension of $X$ is equal to one. Consequently, a flow is minimal and expansive if, and only if, it is a suspension of a minimal subshift. 
\end{mainthm}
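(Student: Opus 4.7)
My plan is to attack Theorem~\ref{minexp} in three coordinated stages. First, I will prove that a non-trivial minimal expansive flow has no spiral points in the sense of \cite{KS}, which lets me invoke Theorem~\ref{thmKS} to conclude that $\dim X\le 1$. Second, non-triviality of the flow forces $\dim X\ge 1$, giving $\dim X=1$. Third, for the ``consequently'' clause, I will build a global totally disconnected Poincar\'e section $\Sigma$ whose first-return map $T\colon\Sigma\to\Sigma$ is a minimal expansive homeomorphism of a compact zero-dimensional space and therefore conjugate to a minimal subshift; $\phi$ is then recovered as the suspension of $T$ under the continuous return-time function.

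\textbf{Stage 1 (no spiral points).} The notion of spiral point in \cite{KS} captures the obstruction to having a local cross-section $\Sigma_p$ at $p$ with the property that every $X$-orbit remaining close to $\phi_\R(p)$ crosses $\Sigma_p$ transversely at the expected times. My strategy is to assume that $p$ is spiral and to use minimality to produce points $y_n\in X$ close to $p$ whose orbits accumulate arbitrarily often near $p$ with the spiral reparametrization persisting in the limit. A compactness extraction then yields a pair $(p,y)\in X\times X$ with $y\notin \phi_{[-\eps,\eps]}(p)$ and an $h\in Rep$ for which $d(\phi_t(p),\phi_{h(t)}(y))\le\delta$ for every $t\in\R$, contradicting Definition~\ref{Def: exp}. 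Hence spiral points cannot coexist with minimality under expansiveness.

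\textbf{Stages 2 and 3 (dimension and suspension).} With Stage~1 in hand, Theorem~\ref{thmKS} gives $\dim X\le 1$. Non-triviality and minimality force every orbit to be dense in $X$, hence a non-degenerate continuum, so $\dim X\ge 1$. For the suspension representation, I take a small flow box $U_p\cong\Sigma_p\times(-\tau,\tau)$; expansiveness forces $\Sigma_p$ to be totally disconnected, since a non-trivial continuum in $\Sigma_p$ would shadow $\phi_\R(p)$ under the identity reparametrization, violating expansiveness. Minimality provides a uniform bound on return times to $\Sigma_p$, so the first-return map $T\colon\Sigma_p\to\Sigma_p$ is a well-defined homeomorphism of a compact zero-dimensional metric space; it inherits minimality directly and expansiveness through the uniform bound on return times. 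Ma\~n\'e's theorem applied to $T$ (equivalently, the folklore identification of minimal expansive homeomorphisms on zero-dimensional compact metric spaces with minimal subshifts) yields a minimal subshift conjugate to $T$, and $\phi$ is topologically equivalent to its suspension under the continuous positive return-time function. The converse direction of the ``if and only if'' is immediate, since suspensions of minimal subshifts are easily checked to be minimal and expansive.

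\textbf{Main obstacle.} The technical heart of the argument is Stage~1. Spiral twisting is precisely the kind of behavior that abstract expansiveness of flows does tolerate, so eliminating it requires using minimality in an essential way. The challenge is to convert the recurrence furnished by minimality into a concrete orbit pair $(p,y)$ violating the $\eps$--$\delta$ expansive inequality, while carefully controlling the time-shift induced by the spiral reparametrization as orbits wind around $\phi_\R(p)$. All remaining steps --- dimension bookkeeping, construction of a global section in a minimal flow, and the subshift conclusion for zero-dimensional minimal expansive homeomorphisms --- become standard once the cross-section theory is unobstructed.
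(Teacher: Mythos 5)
Your Stage 1 is where the argument breaks, and it breaks for a concrete reason: the lemma you propose there is false as stated. A periodic orbit is a non-trivial minimal expansive flow in which \emph{every} point is a spiral point (take $g(t)=t+T$ with $T$ the period), so spiral points are perfectly compatible with minimality plus expansiveness, and no compactness extraction can turn a spiral point directly into a violation of Definition~\ref{Def: exp}. Beyond that counterexample, the extraction you sketch does not produce what you need: the spiral condition controls $d(\phi_t(x),\phi_{g(t)}(x))$ only as $t\to+\infty$ and only along the forward orbit of the single point $x$, so a limit pair $(p,y)$ obtained this way comes with no control in backward time, no guarantee that $y\notin\phi_{[-\eps,\eps]}(p)$ (the approximating pairs may collapse onto one orbit segment), and, crucially, no handle on the time-shift $g(t)-t$, which may tend to infinity. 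That unbounded-shift phenomenon is exactly the obstruction that kept the question open since \cite{KS}, and it is what the paper's argument is built to overcome: the actual route is to prove (Theorem~\ref{spiral}) that for an expansive flow the $\omega$-limit set of any spiral point is a periodic orbit, using a field of cross-sections \cite{ArFCS} together with the catenary sectional metric of \cite{ArCatenary} to force exponential contraction along the spiral and hence bounded return times; only then does minimality enter, showing that (unless $X$ itself is a periodic orbit, a case handled separately) no spiral point can exist, after which \cite{KS}*{Theorem 3.6} gives $\dim X=1$. Your plan omits precisely this mechanism, and without it (or a substitute of comparable strength) Stage 1 cannot be completed.

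Stages 2 and 3 are closer to workable but also have soft spots. The claim that a non-trivial continuum in a local cross-section would ``shadow $\phi_\R(p)$ under the identity reparametrization'' is not what expansiveness says---points of a small section separate under the flow, and nothing forces a continuum in the section to track the orbit of $p$ for all time; the correct reason sections are zero-dimensional here is dimension-theoretic (a one-dimensional subset of a section sweeps out a two-dimensional flow box, as in Proposition~\ref{dimCS}). Likewise, continuity and well-definedness of a global first-return map on a zero-dimensional section require care. The paper sidesteps all of this by quoting Bowen--Walters: \cite{BW}*{Theorem 10} gives the suspension representation for one-dimensional expansive flows, and \cite{BW}*{Theorem 6} gives both the minimality transfer and the converse direction. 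You would be better served citing those results than rebuilding them.
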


The main idea behind proving Theorem \ref{minexp} is to use more modern techniques to demonstrate that the hypothesis in the original result from \cite{KS} is superfluous. We achieve this by applying the theory of cross-sections introduced in \cite{ArFCS} in conjunction with the catenary sectional metric from \cite{ArCatenary}. Specifically, we show that the existence of spiral points would lead to the presence of periodic points for
$\phi$. Consequently, this would imply either $X$ is a periodic orbit or $\phi$ is not minimal.

  With these ideas in mind, we continue to explore the role of minimal sets in expansive dynamics. Our techniques also extend to studying other features of expansive flows. Another long-standing open question is whether expansive flows, aside from the minimal case, admit periodic orbits. Note that periodic orbits are indeed minimal sets. Additionally, it is classical that every flow contains at least one minimal subset. Using our techniques, we show that, although the existence of periodic orbits in expansive flows remains unresolved, we can still establish the presence of infinitely many minimal sets.

\begin{mainthm}\label{minimalsets}
    Suppose the topological dimension of $X$ is bigger than one.  If $\phi$ is an expansive flow on $X$, then $X$ contains infinitely many minimal subsets for $\phi$.
\end{mainthm}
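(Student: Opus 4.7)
The plan is a proof by contradiction: assume that $\phi$ admits only finitely many minimal subsets $M_1,\ldots, M_k$, and derive a contradiction with $\dim X>1$. For each $i$, the restriction $\phi|_{M_i}$ is a minimal expansive flow on a compact metric space, so by Theorem~A each $M_i$ has topological dimension at most one (being either a single fixed point or a suspension of a minimal subshift). The finite sum theorem of topological dimension then yields $\dim\bigl(\bigcup_{i=1}^k M_i\bigr)\le 1$.

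The main objective is to show that in fact $X=\bigcup_{i=1}^{k}M_i$, which combined with the inequality above forces $\dim X\le 1$ and contradicts the hypothesis. To this end I imitate the structure of the proof of Theorem~A: pick a point $x\in X\setminus\bigcup_i M_i$ (supposed to exist for contradiction), consider its orbit closure $K=\overline{\phi_{\R}(x)}$ (compact, $\phi$-invariant, containing some $M_i$ properly), and apply the cross-section theory of \cite{ArFCS} together with the catenary sectional metric of \cite{ArCatenary} to construct a local cross-section $\Sigma\subset K$ on which the first return map $P\colon\Sigma\to\Sigma$ is an expansive homeomorphism. Since $\phi$-minimality on each $M_i$ translates to $P$-minimality on the corresponding intersection with $\Sigma$, the $P$-minimal subsets of $\Sigma$ are precisely the non-empty traces $M_i\cap\Sigma$, and are therefore finitely many. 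Ma\~n\'e's theorem \cite{Ma} applied to each restriction $P|_{M_i\cap\Sigma}$ then shows that every $P$-minimal set is zero dimensional.

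The main obstacle is to promote zero dimensionality of the $P$-minimal sets to zero dimensionality of $\Sigma$ itself. The plan here is to use, once again, the spiral-point elimination developed in the proof of Theorem~A: a point $y\in \Sigma$ outside all $M_i\cap\Sigma$ must have its $P$-orbit accumulate on the finite union $\bigcup_i(M_i\cap\Sigma)$ (since $\omega$-limits of $\phi$ always contain a minimal set), and the corresponding $\phi$-orbit of $y$ then displays a spiral-type configuration that, via the catenary-metric argument of Theorem~A, either forces a new periodic orbit (contradicting the finiteness of the minimal sets) or directly violates expansivity. Once $\dim\Sigma=0$ is in hand, the suspension structure gives $\dim K\le 1$, and a further application of Theorem~A to the resulting one-dimensional expansive (but possibly non-minimal) flow on $K$ forces $K$ to be a finite union of minimal sets, contradicting the choice $x\notin\bigcup_i M_i$.
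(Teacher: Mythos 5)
Your opening move (assume finitely many minimal sets, note each has dimension at most one by Theorem A) matches the paper, but the core of your argument has genuine gaps. First, your target is too strong: you try to prove $X=\bigcup_i M_i$, whereas the paper only needs (and proves) that for any one-dimensional compact invariant set $K$ there exists a minimal set \emph{disjoint} from $K$ (Theorem \ref{one-dim}), which immediately contradicts finiteness. More importantly, your mechanism for reaching the stronger statement fails at two points. A point $y$ whose orbit accumulates on the union of the $M_i$ is \emph{not} thereby a spiral point: being a spiral point means $d(\phi_t(y),\phi_{g(t)}(y))\to 0$, i.e.\ the orbit approximates a time-shifted copy of \emph{itself}; an orbit can approach an infinite minimal set (say a suspended minimal subshift) without any such self-approximation, so the catenary-metric/periodic-orbit machinery of Theorem A simply does not apply to it, and no contradiction with expansivity is produced. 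In addition, the cross-section setup you invoke is not available: a local cross-section $\Sigma\subset K$ does not carry a globally defined first-return homeomorphism $P\colon\Sigma\to\Sigma$ (orbits need not return to a single local section), so neither the identification of $P$-minimal sets with the traces $M_i\cap\Sigma$ nor the ``suspension structure'' giving $\dim K\le 1$ is justified; this is precisely why the paper works instead with a finite family of adapted cross-sections, the discretized map $\Phi$ on their union, the continuum-expansion lemmas of \cite{ACP}, and Kato's covering lemma, in order to produce an orbit staying uniformly away from $K\cap T$.

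Your final step is also false as stated: Theorem A says nothing about non-minimal expansive flows on one-dimensional sets, and such flows need not decompose into finitely many minimal sets (the suspension of the full $2$-shift is a one-dimensional expansive flow in which most points lie in no minimal set). So even if you had $\dim K\le 1$, no contradiction with $x\notin\bigcup_i M_i$ would follow. To repair the argument you would essentially have to reproduce the paper's route: show that when $\dim X>1$ some point carries a nondegenerate local stable or unstable continuum (Lemma \ref{bigstable}), expand it under the discretized dynamics (Lemma \ref{futuretimes}), and use the zero-dimensionality of $K\cap T$ (Proposition \ref{dimCS}) together with Lemma \ref{cover} to locate an orbit whose closure avoids $K$, hence a new minimal set.
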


In \cite{Wi} there is another result for homeomorphisms whose proof depends on finding periodic orbits in the $\omega$-limit set of a spiral point. Precisely,  it is shown that if $X$ is compact and $f$ is a homeomorphism of $X$ which is expansive on 
$X\setminus\cup_{i=1}^N\orb(x_i)$ for some $N\geq 1$ and $x_1,\dots,x_N\in X$, then $f$ is expansive on $X$.
In the translation of this result to the flow scenario we cannot apply our previous results because we need expansivity on a compact set. However, with different techniques we can deal with the spiral points and show the next result.

\begin{mainthm}\label{charac}
A regular flow $\phi$ which is expansive on $X\setminus\orb(x_1)$ for $x_1\in X$ is expansive on $X$.
\end{mainthm}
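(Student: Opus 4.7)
The strategy is a proof by contradiction that concentrates the difficulty on a single residual subcase. Suppose $\phi$ is not expansive on $X$ at some scale $\epsilon>0$: for each $n\in\N$ there exist $x_n,y_n\in X$ and $h_n\in Rep$ with
\[
d(\phi_t(x_n),\phi_{h_n(t)}(y_n))\le \tfrac{1}{n}\quad\text{for every }t\in\R,
\]
while $y_n\notin\phi_{[-\epsilon,\epsilon]}(x_n)$. Expansivity of $\phi$ on $X\setminus\orb(x_1)$ forbids both points from lying in $X\setminus\orb(x_1)$ for $n$ large, and by the symmetry of the definition (swap $(x_n,y_n)$ and replace $h_n$ by $h_n^{-1}$) we may assume $x_n=\phi_{s_n}(x_1)$. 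The orbit-shift
\[
\tilde h_n(u)=h_n(u-s_n)-h_n(-s_n),\qquad \tilde y_n=\phi_{h_n(-s_n)}(y_n),
\]
belongs to $Rep$ and converts the tracking into $d(\phi_u(x_1),\phi_{\tilde h_n(u)}(\tilde y_n))\le 1/n$ for all $u\in\R$; at $u=0$ this forces $\tilde y_n\to x_1$. Crucially, $\tilde y_n\in\orb(x_1)$ iff $y_n\in\orb(x_1)$, since $\tilde y_n\in\orb(y_n)$.

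\textbf{Case 1:} $y_n\notin\orb(x_1)$ for infinitely many $n$. The triangle inequality applied to the trackings for indices $n$ and $m$, via the reparametrization $g_{nm}=\tilde h_m\circ\tilde h_n^{-1}\in Rep$, gives
\[
d(\phi_s(\tilde y_n),\phi_{g_{nm}(s)}(\tilde y_m))\le \tfrac{1}{n}+\tfrac{1}{m},\qquad s\in\R.
\]
Since $\tilde y_n,\tilde y_m$ both lie in $X\setminus\orb(x_1)$, expansivity on this set yields $\tilde y_m=\phi_{\tau_{nm}}(\tilde y_n)$ with $|\tau_{nm}|\le\epsilon$ for $n,m$ large. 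Fixing such an $n$ and letting $m\to\infty$, compactness of $[-\epsilon,\epsilon]$ together with $\tilde y_m\to x_1$ extracts a subsequential limit $\tau_n^{\ast}$ satisfying $\phi_{\tau_n^{\ast}}(\tilde y_n)=x_1$, so $\tilde y_n\in\orb(x_1)$---contradicting the standing assumption.

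\textbf{Case 2:} $y_n\in\orb(x_1)$ for infinitely many $n$. Write $y_n=\phi_{r_n}(x_n)$ with $|r_n|>\epsilon$ and $d(x_n,\phi_{r_n}(x_n))\to 0$. The easy subcases---in which a subsequential limit of $r_n$ either vanishes or forces the existence of a periodic orbit---are dispatched by taking $\epsilon$ smaller than half of any period that could arise (which is legitimate, since expansivity at a smaller $\epsilon$ implies it at a larger one); in each instance the limit equation $\phi_{r^{\ast}}(\phi_{s^{\ast}}(x_1))=\phi_{s^{\ast}}(x_1)$ combined with regularity pushes $r_n\!\!\mod T$ into $(-\epsilon,\epsilon)$, contradicting $|r_n|>\epsilon$. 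The genuine obstacle is the remaining situation, where $\orb(x_1)$ is recurrent and non-periodic and $r_n$ is unbounded. To handle it I would adapt the cross-section machinery behind the proof of Theorem~A: choose, via the cross-section theory of \cite{ArFCS}, a transverse section $\Sigma$ through an accumulation point $x_\infty$ of $(x_n)$; since $\orb(x_1)\cap\Sigma$ is at most countable, points $z_n\in\Sigma\setminus\orb(x_1)$ accumulate at $x_\infty$; and invoking the catenary sectional metric of \cite{ArCatenary} I would transfer the tracking of $(x_n,y_n)\subset\orb(x_1)$ to a tracking of a nearby pair $(z_n,z_m)\subset X\setminus\orb(x_1)$ on $\Sigma$, at which point Case~1 applies verbatim. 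The hard part will be making this transfer rigorous: a naive $C^0$-perturbation of $x_n$ off $\orb(x_1)$ destroys the uniform-in-$t$ tracking, and the catenary metric is precisely the tool designed to control the drift of orbits relative to $\Sigma$ over long return times, allowing the tracking to survive the perturbation globally.
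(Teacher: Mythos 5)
Your treatment of the mixed case (exactly one of the two points on $\orb(x_1)$) is essentially correct, and your sequence-based shift-and-triangle argument is a reasonable substitute for the paper's corresponding step. The genuine gap is your Case 2, where both points lie on $\orb(x_1)$: this is the heart of the theorem and you leave it unproved. Two concrete problems. First, the ``easy subcases'' are not actually dispatched: when $r_n$ is bounded, passing to the limit only produces a point $p\in\overline{\orb(x_1)}$ with $\phi_{r^{*}}(p)=p$, i.e.\ a periodic point \emph{near} the orbit; since $x_n$ itself need not be periodic, nothing forces $r_n$ modulo a period into $(-\eps,\eps)$, so no contradiction with $|r_n|>\eps$ follows from the limit equation alone. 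Second, and more seriously, your proposed handling of the unbounded case is circular: the catenary sectional metric is only established (in \cite{ArCatenary}, quoted in Section 2) for flows that are already expansive on all of $X$, which is precisely the conclusion you are trying to reach, and you offer no argument that expansivity on $X\setminus\orb(x_1)$ suffices to build it, nor any precise mechanism for ``transferring'' a tracking pair from $\orb(x_1)$ to nearby points off the orbit while preserving uniform closeness for all time. As you yourself note, that transfer is the hard part, so the proposal stops exactly where the difficulty begins.

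For comparison, the paper closes this case with a cardinality argument that needs none of this machinery. If $\phi$ restricted to $\orb(x_1)$ were not expansive, then by Lemma \ref{lemNoExpOrb} the dynamical ball $\Gamma_r(x_1)$ inside a cross-section $H(x_1)$ is uncountable for every $r>0$; on the other hand $\orb(x_1)\cap H(x_1)$ is countable, because returns of the orbit to a cross-section of time $\eps$ are $\eps$-separated in time. Hence $\Gamma_r(x_1)$ contains two distinct points of $X\setminus\orb(x_1)$, which lie on a common cross-section yet track each other within $2r$ for all time, contradicting expansivity on $X\setminus\orb(x_1)$ once $2r$ is below its expansivity constant. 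This produces an expansivity constant for $\phi$ along $\orb(x_1)$ and settles exactly the case your proposal leaves open; without an argument of this kind (or a rigorous version of your transfer), your proof is incomplete.
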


In \cite{ArExpFSurf} singular expansive flows of surfaces are classified. It is essentially shown that they are conjugate to singular suspensions of minimal interval exchange maps. These kinds of flows have fixed points and are expansive in the sense of Komuro: 
The flow $\phi$ is $k^*$-\textit{expansive} if for all $\epsilon>0$ there is $\delta>0$ such that 
if $x,y\in X$, $h\in Rep$ and $d(\phi_{h(t)}(x),\phi_t(y))<\delta$ for all $t\in\R$ then 
$\phi_{h(t_0)}(x)\in\phi_{[t_0-\epsilon,t_0+\epsilon]}(y)$ for some $t_0\in\R$.
This form of expansivity was introduced by Komuro to study the Lorenz attractor and is shown to be equivalent to Definition \ref{Def: exp}, if the flow is regular.
For the results in \cite{ArExpFSurf} it is important to show that $k^*$-expansivity is invariant under adding or removing index zero singular points. This was shown for surface flows. 
We will conclude from Theorem \ref{charac} that a regular flow is expansive if and only if it is $k^*$-expansive after adding an \textit{index zero} singularity.

The remainder of this paper is organized as follows. In Section \ref{Sec2}, we introduce the foundational background and present some relevant results about regular flows. Section \ref{SecSpiral} is dedicated to studying the behavior of spiral points in expansive flows and provides a proof for Theorem \ref{minexp}. In Section \ref{SecMinSets}, we prove Theorem \ref{minimalsets}.
In Section \ref{secCharac} we prove Theorem \ref{charac}.
\section{Preliminaries}\label{Sec2}

In this section, we present the main definitions used in this work and provide the reader with some previously established results on the theory of expansive flows that will be instrumental in our analysis. From here on, let $(X,d)$ denote a compact metric space.

\begin{definition}[Topological Dimension]
 We say that  $X$ has topological dimension $n$ and denote $\dim(X)=n$ if every open cover $\mathcal{U}$ of $X$ admits a subcover $\mathcal{U}'$ such that any point of $X$ is contained in at most $n+1$ elements of $\mathcal{U}'$. If such a $n$ does not exist, we say that $\dim(M)=\infty$. 
\end{definition}

\begin{rmk}
    We would like to remark the following basic facts about topological dimension.
    \begin{enumerate}
			\item The subcover in the previous definitions can always be chosen to be finite, due the compactness of $X$.
    \item The topological dimension of $X$ is invariant under homeomorphisms.
    \item If $X=[0,1]^n$, then $\dim(X)=n$.
    \end{enumerate}
\end{rmk}

Hereafter,  $\phi$ denotes a continuous flow over $X$. All the flows under consideration in this work will always be regular, unless otherwise stated.  The \textit{orbit} of a point $x$ under $\phi$ is the set $$\orb(x)=\{\phi_t(x), t\in \R\}.$$ 
Also, the positive and negative orbits of $x$ are respectively the sets
 $$ \orb^+(x)=\{\phi_t(x), t\geq 0\} \textrm{ and } \orb^-(x)=\{\phi_t(x), t\leq 0\}.    $$
For any $x\in X$, define the {\it $\omega$-limit set} of $x$ by
$$\omega(x)=\{z\in X; \textrm{ there is } t_k\to \infty, \phi_{t_k}(x)\to z\}.$$
Recall that there sets $\overline{\orb(x)}$ and $\omega(x)$ are compact and $\phi$-invariant subsets of $X$ and $\omega(x)\subset \overline{\orb^+(x)}$.  A classical fact is that every compact and invariant subset always contains a non-trivial minimal subset. In particular, $\omega(x)$ always contains a non-trivial minimal subset, for every $x\in X$. 

\begin{rmk}
    It is well known that $\phi$ is minimal if, and only if, $\overline{\orb(x)}=X$, for every $x\in X$.
\end{rmk}
 
Let $\mathcal{K}(X)$ be the set of compact subsets of $X$. One can make $\mathcal{K}(X)$ a compact metric space by endowing it with the Hausdorff metric:
$$d_H(A,B)=\inf\{\eps>0; A\subset B_{\eps}(B) \textrm{ and }B\subset B_{\eps}(A)\}.  $$

\begin{definition}
	Let $x\in X$ and $\tau>0$. A set  $A\in \mathcal{K}(X)$ is a \textit{cross-section} of time $\tau$ through $x$,
if $x\in A$, 
$\phi_{[-\tau,\tau]}(A)$ is a neighborhood of $x$ and for every $y\in A$, one has $\phi_{[-\tau,\tau]}(y)\cap A=\{y\}$.
\end{definition}

Next, we recall the concept of field of cross-sections introduced in \cite{ArFCS}.

\begin{definition}
A map $H\colon X \to \mathcal{K}(X)$ is a field of cross-sections for $\phi$ if there is $\tau>0$ such that $H(x)$ is a cross-section through $x$ and time $\tau$, for every $x\in X$.
\end{definition}

In \cite{ArFCS}*{Theorem 2.51} it is proved that every regular flow admits a semicontinuous field of cross-sections.
If $H$ is such a field of cross-sections and  $\eps>0$  let us denote
$H_\eps(x)=H(x)\cap \overline{B_\epsilon(x)}$. 
In this way, we obtain a field of cross-sections with arbitrarily small diameter and time. 
 Precisely, if $\phi$ is a regular flow, there are $\delta_0,\tau_0>0$   such that for any $0<\delta<\delta_0$ and $0<\tau<\tau_0$ then $H_{\delta}(x)$ is a semicontinuous field of cross-sections 
of time $\tau$ and diameter less than $2\delta$
 for every $x\in X$.

Another important tool is the concept of partial flow. To define it, suppose $H=H_\eps$ is a continuous field of cross-sections and denote  \[N_\epsilon=\{(x,y)\in X\times X:y\in H_\epsilon(x)\}.
\]
Following \cite{ArCatenary}*{Definition 2.1}, we can define a partial flow on $N_\epsilon$. 	A partial flow is essentially a flow whose orbits may not be defined for all $t\in\R$, just as in the basic theory of differential equations.
In our case, for $(x,y)\in N_\epsilon$ and $t\in\R$ (suppose $t>0$)   we define $\psi_t(x,y)$ ($\psi$ is the partial flow) only if there is $h\colon [0,t]\to\R$ continuous such that $h(0)=0$ and $\phi_{h(s)}(y)\in H_\epsilon(\phi_s(x))$ for all $s\in [0,t]$. In this case, we define
$\psi_t(x,y)=(\phi_t(x),\phi_{h(t)}(y))$. By \cite{ArFCS}*{Lemma 3.1}, this reparametrization $h$ is unique and the partial flow is well defined.

\begin{remark}\label{rmk1}
Observe that, by the continuity of $\phi$, if $(x,y)\in N_{\eps}$ and  $y$ is close to $x$, there is $t>0$ so that the partial flow $\psi_s(x,y)$ is well defined for every $0\leq s\leq t$. On the other hand, as long as $t$ grows, $\psi_t(x,y)$ may not be defined. Nevertheless, by classical arguments (see for instance \cite{ArFCS}) 
there is $\delta>0$ such that
if $h\in Rep$ and $d(\phi_{t}(x),\phi_{h(t)}(y))<\delta$ for every $t\geq 0$, then by possibly modifying $h$, we can assume $\phi_{h(t)}(y)\in H_{\eps}(x)$, for every $t\geq 0$. Consequently, then $\psi_t(x,y)$ is defined for every $t\geq 0$. 
\end{remark}

Next, we define a crucial ingredient of our proof. A \textit{sectional metric} (see \cite{ArCatenary}),  is a continuous map
\[
D\colon \{(x,y,z)\in X^3:y,z\in H_\epsilon(x)\}\to\R
\]
such that $D_x=D(x,\cdot,\cdot)$ is a metric on $H_\epsilon(x)$ for all $x\in X$. 
% ***,  where dots indicate time derivatives with respect to $\psi$. Precisely,
Suppose that $y,z\in H_\epsilon(x)$, $\delta>0$ and there are continuous functions
	$h_y,h_z\colon (-\delta,\delta)\to\R$ such that
\begin{itemize}
	\item $h_y(0)=0$, $h_z(0)=0$ 
	\item for all $|t|<\delta$
	\[
	\left\{
	\begin{array}{l}
		\phi_{h_y(t)}(y)\in H_\epsilon(\phi_t(x)),\\
		\phi_{h_z(t)}(z)\in H_\epsilon(\phi_t(x)).
	\end{array}
	\right.
	\]
\end{itemize}
The time derivatives are indicated by dots and defined as
\[
\dot D_x(y,z)=\frac{d}{dt} D_{\phi_t(x)}(\phi_{h_y(t)}(y),\phi_{h_z(t)}(z))|_{t=0}
\]
and
\[
\ddot D_x(y,z)=\frac{d}{dt} \dot D_{\phi_t(x)}(\phi_{h_y(t)}(y),\phi_{h_z(t)}(z))|_{t=0}.
\]

\begin{definition}
	A sectional metric $D$ is \textit{catenary} if it satisfies $\ddot D_x=D_x$ for all $x\in X$.
\end{definition}
The next result asserts the existence of catenary sectional metrics for any expansive flow.
\begin{theorem}[\cite{ArCatenary}*{Theorem 5.7}]
	If $\phi$ is an expansive flow and $H$ is a field of cross-section for $\phi$, then $H$ admits a catenary sectional metric.
\end{theorem}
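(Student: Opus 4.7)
The strategy I would follow is to seek $D$ in the form $D = D^+ + D^-$, where $D^+,D^-$ are continuous one-sided sectional pseudo-metrics on $H_\epsilon$ satisfying the exponential scaling laws
\[
\dot D^+_x = D^+_x, \qquad \dot D^-_x = -D^-_x.
\]
Any such pair automatically produces a catenary metric, since then $\dot D_x = D^+_x - D^-_x$ and $\ddot D_x = D^+_x + D^-_x = D_x$. The two summands are to play the roles of the unstable ($D^+$, growing like $e^t$) and stable ($D^-$, decaying like $e^t$) components along the partial flow.

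To build $D^+$ I would fix any continuous reference sectional metric $\rho$ on $H_\epsilon$ (for example, the restriction of $d$), and for $y,z \in H_\epsilon(x)$ let $\tau^+(x,y,z)$ be the supremum of $t \geq 0$ for which both partial orbits $s\mapsto \psi_s(x,y)$ and $s\mapsto \psi_s(x,z)$ are simultaneously defined on $[0,t]$; symmetrically define $\tau^-(x,y,z)\leq 0$. Definition \ref{Def: exp} forces $\tau^+$ and $\tau^-$ to not both be infinite for points on distinct orbits (provided $\epsilon$ is small enough), since two points whose partial orbits stayed jointly confined to the field $H_\epsilon$ for all time would give a pair of orbits staying uniformly close up to reparametrization, which expansiveness forbids. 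Writing $y_t,z_t$ for the second coordinates of $\psi_t(x,y),\psi_t(x,z)$, I would then set
\[
D^+_x(y,z) := e^{-\tau^+(x,y,z)}\,\rho_{\phi_{\tau^+}(x)}\!\bigl(y_{\tau^+},\,z_{\tau^+}\bigr),
\]
and analogously $D^-_x(y,z) := e^{\tau^-(x,y,z)}\,\rho_{\phi_{\tau^-}(x)}(y_{\tau^-},z_{\tau^-})$ (with the convention that a factor $e^{-\infty}$ kills the contribution when the exit time is infinite). The scaling identity is transparent from the semigroup property of $\psi$: advancing the basepoint by a small $s>0$ replaces $\tau^+$ by $\tau^+-s$ while leaving the configuration at absolute time $\tau^+$ untouched, so $D^+_{\phi_s x}(y_s,z_s) = e^s D^+_x(y,z)$, which differentiates at $s=0$ to $\dot D^+_x = D^+_x$; the statement for $D^-$ is symmetric. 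Positive-definiteness of $D = D^+ + D^-$ then follows because at least one of $\tau^+,\tau^-$ is finite for distinct orbits.

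The main obstacle I expect is ensuring that $D^+ + D^-$ is genuinely a continuous metric on each cross-section, not merely a positive symmetric function. Continuity of $\tau^\pm$ should follow from the continuity of the field of cross-sections, combined with continuous dependence of exit configurations on initial data; symmetry is immediate. The triangle inequality, however, is not automatic from the raw exit-time formula, and this is where the delicate work will live. My fallback would be to replace $D^\pm$ by the associated chain metrics
\[
\widetilde D^\pm_x(y,z) := \inf \sum_{i=0}^{n-1} D^\pm_x(w_i,w_{i+1}),
\]
where the infimum runs over finite chains $y=w_0,w_1,\ldots,w_n=z$ in $H_\epsilon(x)$; because the scaling identity $D^\pm_{\phi_s x}(\cdot,\cdot) = e^{\pm s}D^\pm_x(\cdot,\cdot)$ is preserved term-by-term under taking infima over chains, the chain-metric still satisfies $\dot{\widetilde D}^\pm_x = \pm \widetilde D^\pm_x$, while enforcing the triangle inequality by construction. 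Taking $D := \widetilde D^+ + \widetilde D^-$ should then yield the desired catenary sectional metric on $H$.
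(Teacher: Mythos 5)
Your formal skeleton is reasonable: if one can produce continuous one‑sided quantities $D^{\pm}$ that scale exactly like $e^{\pm t}$ along the partial flow, then $D=D^{+}+D^{-}$ satisfies $\ddot D=D$, and your observation that the raw exit‑time expressions do scale exactly (because the domain of $\psi$ is shift‑invariant along the base orbit) is correct, as is the use of expansiveness, with $\epsilon$ below the expansivity constant, to rule out $\tau^{+}=+\infty$ and $\tau^{-}=-\infty$ simultaneously for distinct points. Note, however, that the present paper does not prove this statement at all; it imports it from \cite{ArCatenary}*{Theorem 5.7}, so the comparison can only be with the cited construction, and your sketch leaves the genuinely hard parts open.

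Two gaps are essential. First, the repair of the triangle inequality by the chain infimum destroys exactly the property the theorem is for. The partial flow is only \emph{partially} defined: an arbitrary chain point $w_i\in H_\epsilon(x)$ need not admit $\psi_s(x,w_i)$, and $H_\epsilon(\phi_s(x))$ contains points with no backward continuation to $H_\epsilon(x)$, so chains do not transport between the two sections and the identity $\widetilde D^{\pm}_{\phi_s(x)}(y_s,z_s)=e^{\pm s}\widetilde D^{\pm}_x(y,z)$ is not preserved term by term; at best one gets an inequality, hence $\ddot D\geq D$ or nothing, not the exact equation $\ddot D=D$ that Remark \ref{rmk2} and \eqref{ecuCatenaria} extract and that the proofs in this paper actually use. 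Moreover the chain infimum may collapse (you never argue that $\widetilde D^{+}+\widetilde D^{-}$ stays positive on distinct points), so positive definiteness is unproved. Second, continuity: a sectional metric is by definition a \emph{continuous} map on $\{(x,y,z):y,z\in H_\epsilon(x)\}$, but joint exit times $\tau^{\pm}$ and exit configurations are in general only semicontinuous (tangential exits; pairs whose joint confinement time jumps to $+\infty$ under small perturbation), and the supremum $\tau^{+}$ need not even be attained, in which case your formula has no exit configuration to evaluate $\rho$ at. Handling precisely these two points --- obtaining continuity and the triangle inequality without losing the exact exponential scaling --- is the analytic core of the cited theorem (it is why Lewowicz/Fathi‑type constructions work with data accumulated along whole orbit segments rather than with exit‑time values), and the proposal as written does not close it.
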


\begin{rmk}\label{rmk2}
By solving the second-order differential equation $\ddot D=D$ we have that
$$D_{\phi_t(x)}(\phi_{h_y(t)}(y),\phi_{h_z(t)}(z))=a e^{-t}+be^t$$
for all $t$ where the partial flow is defined. Moreover, the definition of the partial flow depends on the triple $(x,y,z)$.  In particular, if there are $(x,y,z)$  so that the partial flow is defined for every $t\geq 0$, considering that the space is compact and the sectional metric is bounded, we have $b=0$ and 
\begin{equation}
D_{\phi_t(x)}(\phi_{h_y(t)}(y),\phi_{h_z(t)}(z))=D_x(y,z) e^{-t}
\end{equation}
for all $t\geq 0$.  
\end{rmk}

Finally, we introduce one of the main subjects in this work: the spiral points. 

\begin{definition}[\cites{ArMin,KS}] 
	We say that $x\in X$ is a \emph{spiral point} if there are $\tau>0$ and a continuous function $g\colon\R\to\R$ satisfying
$g(t)>\tau+t$ for all $t\geq 0$ and 
\[
\dist(\phi_t(x),\phi_{g(t)}(x))\to 0\text{ as }t\to+\infty.
\]
\end{definition}

\begin{remark}\label{rmk3}
Suppose $\phi$ is an expansive flow, $H_\eps$ is a field of cross-section with a catenary metric $D$. Observe that, if $x$ is a spiral point, then for every $0<\delta\leq \eps$,  up to replacing $x$ by a future iterate we can assume $\dist(\phi_t(x),\phi_{g(t)}(x))\leq \delta$, for every $t\geq 0$. Therefore, by combining Remarks \ref{rmk1} and \ref{rmk2}, we obtain:
\begin{equation}
\label{ecuCatenaria}
D_{\phi_t(x)}(\phi_{t}(x),\phi_{g(t)}(x))=D_x(x,\phi_{g(0)}(x)) e^{-t}.
\end{equation}
\end{remark}

\section{Spiral Points and Periodic Orbits}\label{SecSpiral}

This section is dedicated to proving Theorem \ref{minexp}. The central idea in our proof is to explore the relationship between periodic orbits and spiral points for expansive flows. Similarly to the discrete-time case, we will obtain that, under the expansive assumption, periodic orbits must appear in the $\omega$-limit set of any spiral point, as stated in the following key result.

\begin{thm}\label{spiral}
If $\phi$ is expansive and $x_0$ is a spiral point, then $\omega(x_0)$ is a periodic orbit.
\end{thm}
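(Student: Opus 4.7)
The plan is to use the catenary identity from Remark \ref{rmk3} to locate a periodic point $z$ inside $\omega(x_0)$, and then invoke expansivity to upgrade this to the equality $\omega(x_0)=\orb(z)$.

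By Remark \ref{rmk3}, after replacing $x_0$ by a sufficiently large forward iterate (and adjusting $g$ as in Remark \ref{rmk1}), I may assume $\phi_{g(t)}(x_0)\in H_\eps(\phi_t(x_0))$ for every $t\geq 0$ and the catenary identity
\[
D_{\phi_t(x_0)}\bigl(\phi_t(x_0),\phi_{g(t)}(x_0)\bigr)=D_{x_0}(x_0,\phi_{g(0)}(x_0))\,e^{-t}
\]
holds for all $t\geq 0$. Now pick $z\in\omega(x_0)$ and $t_n\to+\infty$ with $\phi_{t_n}(x_0)\to z$. Since $D$ controls the ambient metric on cross-sections of small diameter, the identity above forces $\phi_{g(t_n)}(x_0)\to z$ as well. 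Writing $s_n=g(t_n)-t_n\geq\tau>0$, one has $\phi_{s_n}(\phi_{t_n}(x_0))=\phi_{g(t_n)}(x_0)\to z$.

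The first key step is to extract a bounded subsequence of $(s_n)$, because once $s_n\to s\geq\tau$, continuity of $\phi$ gives $\phi_s(z)=z$ and $z$ is periodic. To force boundedness I would work on a fixed cross-section $\Sigma\subset H_\eps(z)$ through $z$, and choose the $t_n$ as successive hitting times of $\orb(x_0)$ to a small disc around $z$ in $\Sigma$. Since $\phi_{g(t_n)}(x_0)\in H_\eps(\phi_{t_n}(x_0))$ and $D(\phi_{t_n}(x_0),\phi_{g(t_n)}(x_0))\to 0$, the point $\phi_{g(t_n)}(x_0)$ lies on a section very close to $\Sigma$, and a compactness argument for the Poincaré first-return map identifies $s_n$ with a bounded first-return time.

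Once $z$ is known to be periodic, $\orb(z)\subset\omega(x_0)$ is automatic, and the final step is to rule out $\omega(x_0)\supsetneq\orb(z)$. If some $w\in\omega(x_0)\setminus\orb(z)$ existed, the forward orbit of $x_0$ would visit arbitrarily small neighborhoods of both $z$ and $w$ infinitely often; combining this with the reparametrized shadowing provided by $g$ and the periodicity of $z$, I can produce a pair of orbits (a tail of $\orb(x_0)$ and $\orb(z)$ itself) which stay within any prescribed tube of one another for all positive time while $w\notin\phi_{[-\eps,\eps]}(z)$, contradicting Definition \ref{Def: exp}. The main obstacle is the boundedness of $(s_n)$ in the first step: the spiral hypothesis gives only one-sided closeness while expansivity is two-sided, so Definition \ref{Def: exp} cannot be applied directly to $(x_0,\phi_{g(0)}(x_0))$; the resolution is to pass to sectional dynamics on $\Sigma$, where the catenary contraction converts the a priori unbounded lag $s_n$ into a bounded Poincaré return time.
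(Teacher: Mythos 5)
Your argument has a genuine gap at exactly the point you flag as the ``main obstacle'', and the resolution you sketch does not work. The quantity $s_n=g(t_n)-t_n$ is dictated by the fixed spiral reparametrization $g$, and in the hard case of the theorem one has $\liminf_{t\to+\infty}\bigl(g(t)-t\bigr)=+\infty$, so $s_n\to\infty$ for \emph{every} choice of $t_n\to\infty$; no selection of the $t_n$ as hitting times of a disc in a section $\Sigma$ through $z$ can produce a bounded subsequence. The hidden assumption in your sketch is that the spiral companion $\phi_{g(t_n)}(x_0)$ is (up to bounded time) the first return of the orbit to $\Sigma$ after time $t_n$; nothing in the definition of a spiral point gives this, and the orbit may wind past $\Sigma$ arbitrarily many times between $t_n$ and $g(t_n)$. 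The catenary identity of Remark \ref{rmk3} controls only the sectional distance between $\phi_t(x_0)$ and its companion; it gives no bound whatsoever on the lag $g(t)-t$, so it cannot ``convert'' $s_n$ into a Poincar\'e return time. (When $\liminf(g(t)-t)<\infty$ your argument is fine, but that is the easy case and needs no catenary metric at all --- it is Case 1 of the paper's proof, done by bare continuity.)

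What the paper actually does in the unbounded-lag case is substantially different and is the real content of the proof: starting from $x_1=\phi_{g(0)}(x_0)$ it inductively builds a sequence of points $x_n$ on the forward orbit of $x_0$, all lying in one fixed cross-section $H_\delta(x_0)$ and at sectional distance $<\gamma$ from $x_0$, by flowing each $x_n$ for the \emph{fixed} parameter time $g(0)$ along the reparametrization $g_n$ accompanying the orbit of $x_0$ (where the catenary decay \eqref{ecuAcompaña} keeps the points close) and then projecting back into $H_\delta(x_0)$ with the flow-box projection $\pi$. The boundedness that replaces your missing step is the boundedness of the transition times $g_n(g(0))$, which is obtained not from a first-return map but from a compactness/equicontinuity argument: the set $A$ of points that can be carried over the section field along $[0,g(0)]$ is closed, and Lemma \ref{lemConvUnifParam} makes the arrival times depend continuously on the point, hence uniformly bounded on $A$. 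This produces returns to a single section with bounded successive time gaps, whence a periodic point in $\omega(x_0)$; the final upgrade to $\omega(x_0)$ being exactly that periodic orbit then uses that an expansive flow has only finitely many periodic orbits of period below a fixed constant. You would need to supply an argument of this kind (or some other genuine mechanism bounding return times when $g(t)-t\to\infty$) for your proposal to become a proof; as written, both the boundedness step and the concluding ``tube'' contradiction are assertions rather than arguments.
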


The proof of Theorem \ref{minexp} can be easily derived from the previous result. 

\begin{proof}[Proof of Theorem \ref{minexp}]
Let $\phi$ be an expansive minimal flow on  $X$ and suppose $x_0$ is a spiral point. If $X$ is a periodic orbit, then there is nothing to show. Now, suppose $X$ is not a periodic orbit.  By Theorem \ref{spiral},  $\omega(x_0)$ is a periodic orbit. By the minimality of $\phi$, then $X$ is the sole periodic orbit, which is a contradiction. Thus, $X$ cannot contain a spiral point; therefore, \cite{KS}*{Theorem 3.6} implies $\dim(X)=1$, and the proof is complete.

To prove the second part of the theorem, we first notice that by \cite{BW}*{Theorem 6}, the suspension flow of any minimal expansive subshift is  
 minimal and expansive. Conversely, if a flow is minimal and expansive, then $\dim(M)=1$. Thus by \cite{BW}*{Theorem 10}, $\phi$ is the suspension of a subshift which must be minimal and expansive, \cite{BW}*{Theorem 6}.
\end{proof}

The remainder of this section is devoted to the proof of Theorem \ref{spiral}. The main obstacle, in contrast to the discrete-time case, is that the reparametrizations involved in the definition of a spiral point may lead to unbounded return times. This poses a significant difficulty in locating periodic points within the omega-limit set. Nevertheless, the hyperbolic estimates provided by the catenary metrics will ensure the boundedness of return times. 

Hereafter, we suppose $\phi$ is a regular flow on $X$ and fix $\de_0>0$ and $\tau_0>0$ such that, for every $0<\delta<\de_0$ and $0<\eps<\tau_0$, there is a field of cross-sections $H$ with time $\eps$ such that $H(x)= H_{\delta}(x)$, for every $x\in X$. Before beginning with the proof, we need the following lemma:

\begin{lemma}[\cite{ArFCS}*{Lemma 3.2}]
	\label{lemConvUnifParam}
	Let $x,y\in X$ and $a>0$. If $h_n\colon [0,a]\to\R$ are increasing continuous functions satisfying 
	$h_n(0)=0$,
	$x_n\to x$, $y_n\to y$ satisfy
	$\phi_{h_n(t)}(y_n)\in H(\phi_t(x_n))$ for all $n\geq 1$ and for all $0\leq t\leq a$ then $h_n$  converges uniformly to some $h\colon [0,a]\to\R$ satisfying $\phi_{h(t)}(y)\in H(\phi_t(x))$ for all $0\leq t\leq a$.
\end{lemma}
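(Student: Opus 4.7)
The plan is to apply the Arzel\`a--Ascoli theorem to extract a uniform subsequential limit, verify that it satisfies the cross-section condition, and then invoke uniqueness of reparametrizations for the partial flow to upgrade subsequential convergence to convergence of the whole sequence.

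First, I would establish uniform equicontinuity and boundedness of $\{h_n\}$ on $[0,a]$. Fix $t_0\in[0,a]$. The flow box $\phi_{(-\tau,\tau)}(H(\phi_{t_0}(x_n)))$ is an open neighborhood of $\phi_{t_0}(x_n)$, and by the semicontinuous variation of $H$ together with compactness of $\phi_{[0,a]}(x)$, for $n$ large it contains a metric ball of some radius $r>0$ uniform in $n$. Uniform continuity of $\phi$ on $X\times[-(a+1),a+1]$ then yields $\rho>0$ such that for $|t-t_0|<\rho$ the cross-section $H(\phi_t(x_n))$ (of diameter at most $2\delta\ll r$) is contained in this flow box. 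Writing $\phi_{h_n(t)}(y_n)=\phi_u(z)$ with $z\in H(\phi_{t_0}(x_n))$ and $|u|<\tau$, the point $z$ lies on the orbit of $y_n$ inside $H(\phi_{t_0}(x_n))$, as does $\phi_{h_n(t_0)}(y_n)$; the time-$\tau$ property forces them to be separated on this orbit by flow time either $0$ or at least $\tau$. Continuity of $u$ in $t$ with $u=0$ at $t=t_0$ rules out the second case on a uniform neighborhood, yielding $|h_n(t)-h_n(t_0)|=|u|<\tau$ and in fact a uniform modulus of continuity. Together with $h_n(0)=0$, this gives the uniform bound.

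By Arzel\`a--Ascoli, every subsequence of $\{h_n\}$ admits a further subsequence $h_{n_k}$ converging uniformly to a continuous nondecreasing $h\colon[0,a]\to\R$ with $h(0)=0$. Continuity of $\phi$ gives $\phi_{h_{n_k}(t)}(y_{n_k})\to\phi_{h(t)}(y)$ and $\phi_t(x_{n_k})\to\phi_t(x)$, and upper semicontinuity of the closed-valued field $H$ lets the inclusion $\phi_{h_{n_k}(t)}(y_{n_k})\in H(\phi_t(x_{n_k}))$ pass to the limit as $\phi_{h(t)}(y)\in H(\phi_t(x))$. Any two subsequential limits then both define valid reparametrizations of the partial flow trajectory through $(x,y)$ on $[0,a]$, which must coincide by the uniqueness from \cite{ArFCS}*{Lemma 3.1} already invoked in the definition of $\psi$. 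Hence $h_n\to h$ uniformly on $[0,a]$.

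The main obstacle is the uniform equicontinuity: one must produce a lower bound on the size of the flow box around each $\phi_{t_0}(x_n)$ that is independent of $n$, and ensure that the local flow-box decomposition gives the correct branch $u=h_n(t)-h_n(t_0)$ on a neighborhood of $t_0$ whose size does not shrink with $n$. This is exactly where the semicontinuity of $H$ and compactness of $\phi_{[0,a]}(x)$ enter essentially; once secured, the remaining limit extraction and uniqueness arguments are standard.
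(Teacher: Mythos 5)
The paper itself contains no proof of this lemma; it is quoted verbatim from \cite{ArFCS}*{Lemma 3.2}, so your argument has to stand on its own. Your skeleton is reasonable: a compactness argument for $\{h_n\}$, passage of the inclusion to the limit through the closed graph of $H$, and then uniqueness of the reparametrization from \cite{ArFCS}*{Lemma 3.1} to promote subsequential convergence to convergence of the whole sequence. The genuine gap sits in the equicontinuity step, exactly where you place ``the main obstacle''. First, the claim that each flow box $\phi_{[-\tau,\tau]}(H(\phi_{t_0}(x_n)))$ contains a metric ball of radius $r>0$ uniform in $n$ and $t_0$ ``by the semicontinuous variation of $H$ together with compactness'' is not a valid inference. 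The semicontinuity available here is of closed-graph type (if $z_k\to z$, $y_k\in H(z_k)$ and $y_k\to y$, then $y\in H(z)$); this constrains the sections from above and gives no lower bound whatsoever on the interior radius of the flow boxes, and in particular nothing guarantees that this radius exceeds the fixed section diameter $2\delta$, which your containment $H(\phi_t(x_n))\subset\phi_{(-\tau,\tau)}(H(\phi_{t_0}(x_n)))$ requires. That uniform-interior property is a quantitative feature of the specific field built in \cite{ArFCS} (it is also what Remark \ref{rmk1} of the present paper imports from there); it must be invoked explicitly or proved by a covering argument, not deduced from semicontinuity plus compactness.

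Second, and more seriously, even granting the uniform flow box, your branch argument only yields $|h_n(t)-h_n(t_0)|=|u|\le\tau$ for $|t-t_0|<\rho$, where $\tau$ is the fixed time of the field; the closing phrase ``and in fact a uniform modulus of continuity'' is precisely what the Arzel\`a--Ascoli theorem needs, and it is never established. Closeness of $\phi_{h_n(t)}(y_n)$ to $\phi_t(x_n)$ does not force the flow-box time coordinate $u$ to be small when $\delta$ is fixed: the most one can extract (using that for a regular flow on a compact space an orbit segment of small diameter has flow-length below a fixed constant) is a bound on the local oscillation by a constant depending on $\delta$ and $\tau$, not by an arbitrary $\epsilon'>0$, so equicontinuity of $\{h_n\}$ remains unproved. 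A workable repair within your scheme: keep your coarse oscillation bound, replace Arzel\`a--Ascoli by Helly's selection theorem for the increasing, uniformly bounded functions $h_n$ to obtain a pointwise subsequential limit $h$, and then rule out jumps of $h$ as follows. At a jump point $t^{*}$ the one-sided limits $h^{-}$ and $h^{+}$ satisfy $\phi_{h^{-}}(y),\phi_{h^{+}}(y)\in H(\phi_{t^{*}}(x))$ by the closed graph, the jump $h^{+}-h^{-}$ is at most $\tau$ by your coarse bound, and the time-$\tau$ cross-section property (with $\tau$ taken below half the minimal period of $\phi$, which is positive since $\phi$ is regular and $X$ is compact) forces $h^{+}=h^{-}$. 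Increasing functions converging pointwise to a continuous limit on $[0,a]$ converge uniformly, and your uniqueness step via \cite{ArFCS}*{Lemma 3.1} then concludes as you wrote.
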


Now we are able to prove Theorem \ref{spiral}.

\begin{proof}[Proof of Theorem \ref{spiral}]
Suppose $\phi$ is an expansive flow and let $x_0$ be a spiral point of $x_0$. First, we will show that $\omega(x_0)$ contains a periodic orbit.
Let $0<\eps<\tau_0$ and let $0<\delta<\delta_0$ given by the expansiveness of $\phi$ and fix a field of cross-section for $\phi$ with time $\eps$ and such that $H_{\de}(x)= H(x)$, for every $x\in X$. Fix a constant $0<\gamma<\epsilon$.
By the definition of cross-section, 
	$$\phi_{[-\eps,\eps]}(x)\cap H_\delta(x_0)=\{x\}.$$ for all $x\in H_\delta(x_0)$.
Thus, we can consider the flow box:
$$F(x_0,\delta,\eps)=\phi_{[-\eps,\eps]}(H_\delta(x_0)).$$ Hence, if $y\in F(x_0,\delta,\eps)$, then there are a unique $z_y\in H_{\eps}(x_0)$ and a unique time $s_y$ such that  $|s_y|\leq\eps$ and $\phi_{s_y}(z_y)=y$. By the previous remarks,  the flow induces a canonical projection
$$\pi\colon F(x_0,\delta,\eps)\to H_\delta(x_0),$$
by setting $\pi(y)=z_y$. Observe that the projection is clearly continuous, due to the continuity of $\phi$.
Consequently, there is $0<\eta<\gamma/3$ such that if 
for $$y\in H_\delta(x)
\textrm{ and } z\in H_\delta(y)\cap F(x,\delta,\eps)$$ we have
\begin{equation}
\label{ecuCambioSec}
\text{if }D_x(x,y)<\eta\text{ then }|D_y(y,z)-D_x(y,\pi(z))|<\gamma/3.
\end{equation}

Observe that, by possibly taking a future iterate, we can assume that the spiral point $x_0$ satisfies:
\begin{equation}
\label{ecuDeltaEspiral}
\phi_{g(t)}(x_0)\in H_\delta(\phi_t(x_0)),\textrm{  }
 D_{\phi_t(x_0)}(\phi_{g(t)}(x_0),\phi_t(x_0))<\eta,
 \end{equation}
and 
$$g(t)-t>\tau,$$ for every $t\geq 0$. Now, we split the proof into two cases:

\textbf{Case 1:} Suppose   $$\liminf\limits_{t\to+\infty}g(t)-t<\infty.$$ In this case, we can easily obtain a periodic orbit in $\omega(x_0)$ by the continuity of $\phi$. Indeed,  there are a sequence $t_n\to \infty$ and $B>0$ such that $\tau<g(t_n)-t_n<B  $, for every $n\geq 0$. Consequently, are up to take a subsequence, we obtain $g(t_n)-t_n\to s>0$ and $\phi_{t_n}(x_0)\to p\in \omega(x)$. Hence, $\phi_{g(t_n)-t_n}(\phi_{t_n}(x_0))\to \phi_{s}(p)$. But the definition of spiral point implies
$$d(\phi_{t_{n}}(x_0),\phi_{g_{t_n}}(x_0))\to 0,$$
and therefore $\phi_s(p)=p$.  \\

\textbf{Case 2:} Suppose  $$\liminf\limits_{t\to+\infty}g(t)-t=\infty.$$ This case is more delicate, and we will have to use the catenary metric to obtain a periodic orbit in $\omega(x_0)$. Again, by possibly taking a future iterate of $x_0$, we can assume $e^{-\tau}<\frac{1}{3}$.  \\

Next, we shall define a sequence of points $(x_n)_{n\geq 0}$  such that, for every $n\geq 0$:
\begin{itemize}
    \item $x_n\in H_\delta(x_0)\cap \phi_{[0,+\infty)}(x_0)$
    \item $D_{x_0}(x_0,x_n)<\gamma$ for all $n\geq 0$.
\end{itemize}  
Start with $x_0$ and $x_1=\phi_{g(0)}(x_0)$.
Next, define $$x'_2=\phi_{g(g(0))-g(0)}(x_1)=
\phi_{g(g(0))}(x_0).$$
Notice that $x_1\in H_{\delta}(x_0)$ and $x'_2\in H_{\delta}(x_1)$, but it does not necessarily implies $x'_2\in H_{\delta}(x_1)$. 
On the other hand, by applying \eqref{ecuCatenaria} and \eqref{ecuDeltaEspiral} we have
\[
D_{\phi_t(x_0)}(\phi_t(x_0),\phi_{g(t)-g(0)}(x_1))=D_{x_0}(x_0,x_1) e^{-t}<\delta e^{-t}
\]
for all $t\geq 0$ and for $t=g(0)>\tau$ we have
\[
D_{x_1}(x_1,x'_2)=D_{x_0}(x_0,x_1) e^{-t}
< \frac{\eta}{3}.
\]
We can apply \eqref{ecuCambioSec} to obtain
\[
|D_{x_1}(x_1,x'_2)-D_{x_0}(x_1,x_2)|<\frac{\gamma}{3}
\]
where $x_2=\pi(x'_2)$; and 
\[
D_{x_0}(x_1,x_2)<\frac{\gamma}{3}+\frac{\eta}{3}.
\]
Therefore $x_2\in H_{\delta}(x_0)$ and
\[
D_{x_0}(x_0,x_2)\leq 
D_{x_0}(x_0,x_1)+D_{x_0}(x_1,x_2)<\delta+\frac{\gamma}{3}+\frac{\delta}{3}<\gamma,
\]
because $\eta<\gamma/3$. 
Since $x_2=\pi(x'_2)$ we have that $x_2=\phi_{s_2}(x'_2)$ with $|s_2|<\eps$ (recall that $\eps$ is the time of the flow box). 
The inductive definition is now clear, but we make it explicit.
Having $D_{x_0}(x_0,x_n)<\gamma$, there is an increasing continuous function $g_n\colon\R\to\R$ such that $g_n(0)=0$, 
$\phi_{g_n(t)}(x_n)\in H_\epsilon(\phi_t(x_0))$ and
\begin{equation}
\label{ecuAcompaña}
D_{\phi_t(x_0)}(\phi_t(x_0),\phi_{g_n(t)}(x_n))\leq\gamma e^{-t}\text{ for all }t\geq 0.
\end{equation}
For $t=g(0)$ we define $x'_{n+1}=\phi_{g_n(g(0))}(x_n)$ and 
$x_{n+1}=\pi(x'_{n+1})$.
We have 
$$D_{x_1}(x_1,x'_{n+1})<\frac{\gamma}{3},$$
\[
|D_{x_1}(x_1,x'_{n+1})-D_{x_0}(x_1,x_{n+1})|<\frac {\gamma}{3}
\]
which implies
\[
D_{x_0}(x_1,x_{n+1})<\frac{\gamma}{3} + D_{x_1}(x_1,x'_{n+1})<\frac {2\gamma}{3}.
\]
Thus
\[
D_{x_0}(x_0,x_{n+1})\leq D_{x_0}(x_0,x_1)+D_{x_0}(x_1,x_{n+1})<\gamma.
\]

Let us show that $g_n(g(0))$ is bounded. By definition, the local cross sections are closed subsets of the space. 
Consider the set
\[
\begin{array}{rl}
A=\{y\in H_\delta(x_0): & \text{exists }j\colon\R\to\R\text{ continuous }, j(0)=0\text{ and }\\ 
& \phi_{j(t)}(y)\in H_\delta(\phi_t(x_0))\text{ for all }t\in [0,g(0)]\}.
\end{array}
\]
By Lemma \ref{lemConvUnifParam} we have that $A$ is closed.
	Also, Lemma \ref{lemConvUnifParam} implies that for all $y\in A$ and $\epsilon>0$ there is $\delta>0$ such that if $z\in A$ and $\dist(y,z)<\delta$ then $$|j_u(g(0))-j_v(g(0))|<\epsilon,$$ where $j_y,j_z$ are the parameterizations given in the definition of the set $A$ for $y,z$ respectively. As $A$ is compact, taking a finite cover of such $\delta$-balls we see that the values $j(g(0))$ are bounded.
From \eqref{ecuAcompaña} we see that each $x_n\in A$ and we conclude that $g_n(g(0))$ is bounded.
Notice that $g_n(g(0))$ is the time from $x_n$ to $x'_{n+1}$. Thus, $\xi>0$ the time from 
$x_n$ to $x_{n+1}$ is less than $g_n(g(0))+\xi$; \textit{i.e.} the time from 
$x_n$ to $x_{n+1}$ is bounded.
This implies that $\omega(x_0)$ contains a periodic point in $H_{\de}(x_0)$.

Since the flow is expansive, there is a finite number of periodic orbits with period less than any fixed constant. 
This implies that a spiral point may have at most one periodic orbit in its $\omega$-limit set. This proves that $\omega(x_0)$ is a periodic orbit.
\end{proof}

\section{Proof of Theorem \ref{minimalsets}}\label{SecMinSets}

In this section, we prove Theorem \ref{minimalsets}. In contrast with the proof of Theorem \ref{minexp}, where the fields of cross-sections played a central role, here we go in the opposite direction and use cross-sections as a way of discretizing time in regular flows. This approach follows the lines of the work \cite{KS}. The reason we need this discretization is that when one intersects one-dimensional invariant sets with a cross-section, the resultant set is zero-dimensional. In this way, we dispose of several useful tools from topological dimension theory to achieve our goals.
To begin with, we need to set up  notation and collect some tools. The first one is the aforementioned result regarding the dimension of subsets of cross-sections.

\begin{proposition}\label{dimCS}
    Let $\Lambda$ be a compact, invariant, and one-dimensional subset of $X$. If $x\in \Lambda$ and $H(x)$ is a cross-section through $x$, then $\Lambda\cap H(x)$ is a zero-dimensional set.  
\end{proposition}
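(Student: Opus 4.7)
The plan is to exploit the flow-box structure around $x$ provided by the cross-section and reduce the claim to the classical product theorem in dimension theory. First I would fix $\tau'\in(0,\tau]$ small enough that the map
\[
\Phi\colon H(x)\times[-\tau',\tau']\to X,\qquad \Phi(y,t)=\phi_t(y),
\]
is a homeomorphism onto a compact neighborhood $B$ of $x$. Continuity is automatic; surjectivity onto $B$ is the definition of $B$. For injectivity, if $\phi_{t_1}(y_1)=\phi_{t_2}(y_2)$ with $y_1,y_2\in H(x)$ and $|t_1|,|t_2|\leq\tau'$, then $y_2=\phi_{t_1-t_2}(y_1)\in H(x)$ with $|t_1-t_2|\leq\tau$, so the cross-section condition $\phi_{[-\tau,\tau]}(y_1)\cap H(x)=\{y_1\}$ gives $y_1=y_2$; regularity of $\phi$ on the compact space $X$ bounds the periods of periodic orbits below by a positive constant, so after shrinking $\tau'$ if necessary we also get $t_1=t_2$.

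Next I would use $\phi$-invariance of $\Lambda$: a point $\Phi(y,t)=\phi_t(y)$ lies in $\Lambda$ if and only if $y\in\Lambda$. Hence $\Phi$ restricts to a homeomorphism
\[
(\Lambda\cap H(x))\times[-\tau',\tau']\;\xrightarrow{\,\cong\,}\;\Lambda\cap B.
\]
Now assume for contradiction that $\dim(\Lambda\cap H(x))\geq 1$. By the classical product formula of Hurewicz for compact metric spaces (see, e.g., Hurewicz--Wallman, \emph{Dimension Theory}),
\[
\dim\bigl((\Lambda\cap H(x))\times[-\tau',\tau']\bigr)=\dim(\Lambda\cap H(x))+1\geq 2,
\]
so $\dim(\Lambda\cap B)\geq 2$. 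On the other hand $\Lambda\cap B$ is a closed subset of $\Lambda$, hence $\dim(\Lambda\cap B)\leq\dim(\Lambda)=1$, a contradiction. Therefore $\dim(\Lambda\cap H(x))=0$.

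The main obstacle I anticipate is the clean verification that $\Phi$ is a genuine homeomorphism rather than merely a continuous surjection; the injectivity step requires care because, a priori, short periodic orbits could destroy the product structure. This is handled by the positive lower bound on periods of periodic orbits of a regular flow on a compact space, after which the dimension-theoretic argument via the product formula is essentially automatic. As an alternative path avoiding an explicit citation of the product theorem, one could observe that a compact metric space of dimension $\geq 1$ contains a non-degenerate continuum $C$, and then $\Phi(C\times[-\tau',\tau'])\subset\Lambda$ would be a topological square, again contradicting $\dim(\Lambda)=1$.
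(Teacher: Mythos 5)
Your proposal is correct and follows essentially the same route as the paper: both arguments use the flow-box product structure of a cross-section together with invariance of $\Lambda$ to embed a product of a positive-dimensional compact set with an interval inside $\Lambda$, and then invoke Hurewicz's theorem on the dimension of Cartesian products to reach the contradiction $\dim(\Lambda)\geq 2$. Your extra care with injectivity of the flow-box map (ruling out short periods via regularity) is a sound elaboration of what the paper leaves implicit.
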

\begin{proof}
    The proof goes by contradiction. Suppose $\Lambda\cap H(x)$ has  positive dimension, then there is a compact set $$K\subset (\Lambda\cap H(x))$$ with dimension equals one. Since $H(x)$ is a cross-section, then any point in  $K$ belongs to a unique orbit arc contained in $\phi_{[-\tau,\tau]}(H(x))$. So,   the set $\phi_{[-\tau,\tau]}(K)$ is homeomorphic to the product $[-\eps,\eps]\times K$,  and it is contained in $\Lambda$, by invariance. By \cite{Hur}, the topological dimension of $\Lambda$ is at least two, a contradiction.
\end{proof}

Next,  let us fix the notation. Since the flows under consideration are regular,  by \cite{ArFCS}, we can  find $\tau_0,\delta_0>0$ such that for any $0<\tau\leq\tau_0$ and any $0<\delta\leq \delta_0$ there is a field  of cross-sections $$H:X\to \mathcal{K}(X),$$
of time $\tau$ and diameter $\delta$. 
Following \cite{KS}*{Lemma 2.4}, there is a finite collection of points $\{x_1,...,x_n\}$ and a finite collection of cross-sections $T(x_i)\subset H(x_i)$
satisfying

 $$X=\bigcup_{i=1}^n \phi_{[-\tau,0]}(H(x_i))=\bigcup_{i=1}^n \phi_{[0,\tau]}(H(x_i))=\bigcup_{i=1}^n \phi_{[-\tau,0]}(T(x_i))=\bigcup_{i=1}^n \phi_{[0,\tau]}(T(x_i)).$$ 
 For each $i=1,..,n$, denote $T_i=T(x_i)$, $H_i=H(x_i)$. In additon, denote  $$H=\bigcup_{i=1}^n H_i \textrm{ and } T=\bigcup_{i=1}^n T_i.$$ 
 
Now, we introduce a discretized version of $\phi$, restricted to $H$ and $T$. For this purpose, observe that for every $x\in T$, there are $1\leq j\leq n$ and $t_x\in(0,\tau]$ such that $$\phi_{t_x}(x)\in T \textrm{ and } \phi_{(0,t_x)}(x)\cap T=\emptyset.$$ 
 Therefore, we can define a map $\Phi:T\to T$ by setting $\Phi(x)=\phi_{t_x}(x)$. Observe that $\Phi$ is invertible and $\Phi^{-1}(x)=\phi_{s_x}(x)$, where $-\tau<s_x<0$ is the first negative time satisfying $\phi_{s_x}(x)\in T$.    
 By construction,  for every $x\in T$ there is an increasing sequence of times $(t_k)_{k\in \Z}$  and a a sequence cross-sections $(T_k)_{k\in \Z}$ such that $$\Phi^k(x)=\phi_{t_k}(x)\in T_k\subset H_k.$$

Notice that, by the choice of the cross-sections $T$ and $H$, $\Phi$ can be extended to $H$.  Moreover, after possibly reducing the diameter of $T$, by the continuity of $\phi$  we can find $\eta_0>0$ such that If $x\in T$ and $y\in H$ satisfy   $d(y,x)\leq\eta_0$, then  $\Phi(y)\in Int(H_1)$, where the interior is being considered in the induced topology of $H$. In particular, this ensures a certain continuity to $\Phi|_T$. Inductively, if $d(\Phi^k(x),\Phi^k(y)\leq \eta_0$, we can define $\Phi^{k+1}(y)\in Int(H_{k+1})$, for every $k\in \Z$.
Any pair of families of cross-sections $T$ and $H$ satisfying the previous properties is called a pair of \textit{adapted cross-sections}.

With this notation, we can proceed to define the concept of local stable and local unstable sets for $\Phi$, with respect to a given pair of adapted cross-sections.  

\begin{definition}
    Let $T$ and $H$ be a pair of adapted cross-sections, $\eta_0$ as above, $0\leq\eta\leq\eta_0$, and $x\in T$, we define the local stable and local unstable sets of $x$, respectively, by

    $$W^s_{\eta}(x)=\{y;d(\Phi^k(x),\Phi^k(y))\leq\eta, \forall k\geq 0\}.$$
    $$W^u_{\eta}(x)=\{y;d(\Phi^k(x),\Phi^k(y))\leq\eta, \forall k\geq 0\}.$$
    
\end{definition}
  The next result gives us a characterization of expansiveness through pairs of adapted cross-sections.

\begin{theorem}[\cite{KS}]
    A flow $\phi$ is expansive if, and only if, for every pair of families of adapted cross-sections inducing $\Phi$ there is $0<\eta\leq \eta_0$ such that $W_{\eta}^s(x)\cap W_{\eta}^s(x)=\{x\}$.   
\end{theorem}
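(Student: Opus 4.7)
The statement is a translation between the continuous notion of closeness‑up‑to‑reparametrization appearing in Definition \ref{Def: exp} and the purely discrete notion of proximity of $\Phi$‑orbits in the cross‑sections $T$. The plan is to pass between a reparametrization $h\in Rep$ on the flow side and the induced itinerary through the cross‑sections on the discrete side, exploiting that the return times of $\Phi$ lie in $(0,\tau]$ and are therefore uniformly bounded. I read the right‑hand side of the stated equality as $W^s_\eta(x)\cap W^u_\eta(x)=\{x\}$, which is the natural bi‑infinite version.

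For the direction $(\Rightarrow)$, assume $\phi$ is expansive and a pair $T\subset H$ of adapted cross‑sections is given. Fix $0<\epsilon<\tau$ and let $\delta=\delta(\epsilon)$ be the expansivity constant. By uniform continuity of $\phi$ on the compact set $X\times[-\tau,\tau]$, choose $0<\eta\le\eta_0$ so small that any two points of $T$ within distance $\eta$ have their flow images stay within $\delta$ over every time window of length $\tau$. Take $y\in W^s_\eta(x)\cap W^u_\eta(x)$ and let $\Phi^k(x)=\phi_{t_k}(x)$, $\Phi^k(y)=\phi_{s_k}(y)$ be the return times. Define $h\in Rep$ by the piecewise linear interpolation sending $t_k\mapsto s_k$ for all $k\in\Z$; between consecutive returns, the definition of $\eta$ ensures $\dist(\phi_t(x),\phi_{h(t)}(y))\le\delta$ for every $t\in\R$. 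Expansiveness then gives $y\in\phi_{[-\epsilon,\epsilon]}(x)\subset\phi_{(-\tau,\tau)}(x)$, and since $y,x\in T$ the cross‑section property forces $y=x$.

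For the direction $(\Leftarrow)$, given $\epsilon>0$ take a pair of adapted cross‑sections of time $\tau<\epsilon/2$ and let $\eta$ be the constant furnished by the hypothesis. I claim some $\delta>0$ can be chosen so that the expansivity clause holds. Arguing by contradiction, suppose there exist $x_n,y_n\in X$, $h_n\in Rep$ with $\dist(\phi_t(x_n),\phi_{h_n(t)}(y_n))\le 1/n$ for all $t\in\R$ but $y_n\notin\phi_{[-\epsilon,\epsilon]}(x_n)$. By the flow‑box structure and compactness, I can replace $x_n,y_n$ by flow‑iterates (with bounded time change) to obtain points $x_n',y_n'\in T$ that sit on the same orbits; reparametrizing $h_n$ accordingly I may further arrange that the pair $(x_n',y_n')$ has discrete itineraries satisfying $\dist(\Phi^k(x_n'),\Phi^k(y_n'))\le\eta$ for every $k\in\Z$, by taking $1/n$ small compared to the modulus of continuity of the projection $\pi$ onto the cross‑sections. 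The hypothesis then yields $y_n'=x_n'$, hence $y_n\in\phi_{[-2\tau,2\tau]}(x_n)\subset\phi_{[-\epsilon,\epsilon]}(x_n)$, the desired contradiction.

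The main obstacle lies in the $(\Leftarrow)$ direction, in the bookkeeping that converts the continuous hypothesis ``$\dist(\phi_t(x),\phi_{h(t)}(y))\le\delta$'' into the discrete statement ``$\dist(\Phi^k(x'),\Phi^k(y'))\le\eta$''. One must show that, after suitably flowing each $y$ a bounded amount and modifying $h$, the return times of $y$ to $T$ interleave with those of $x$ with a uniformly bounded shift, so that the projected distances on the cross‑sections are controlled by $\eta$. Once this matching of itineraries is established, the hypothesis collapses the two discrete orbits at a single cross‑section point and the flow‑box estimate gives the required control by $\epsilon$.
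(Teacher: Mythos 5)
The paper does not actually prove this statement --- it is imported from \cite{KS} --- so your proposal has to be judged on its own terms. (You are right to read the condition as $W^s_\eta(x)\cap W^u_\eta(x)=\{x\}$; the repeated $W^s$ and the ``$k\geq 0$'' in the paper's definition of $W^u_\eta$ are typos.) The genuine gap is the final step of your $(\Leftarrow)$ direction. From $y_n'=x_n'$ you conclude $y_n\in\phi_{[-2\tau,2\tau]}(x_n)$, but $y_n'$ is reached from $y_n$ by flowing a time approximately $h_n(t_0)$ (plus a correction bounded by the flow-box time), and $h_n\in Rep$ is an arbitrary increasing homeomorphism fixing $0$: nothing bounds $h_n(t_0)$ by a multiple of $\tau$. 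What the discrete hypothesis yields is only $y_n\in\orb(x_n)$, with an uncontrolled time offset $T_n=t_0-h_n(t_0)-\rho_n$. Worse, the dangerous configuration --- $y=\phi_T(x)$ with $|T|$ large while the orbit of $x$ $\delta$-shadows itself after reparametrization, i.e.\ exactly the spiral-type behaviour this paper is devoted to --- satisfies $y_n'=x_n'$ automatically, so a single application of the hypothesis at $x_n'$ cannot exclude it. A correct argument must treat the same-orbit case separately, for instance by showing that a large offset either produces a second, distinct point of $W^s_\eta(x_n')\cap W^u_\eta(x_n')$ at some crossing (contradicting the hypothesis) or forces periodicity with $T_n$ a near multiple of the period, in which case $y_n$ does lie in $\phi_{[-\epsilon,\epsilon]}(x_n)$ and your standing assumption is not contradicted after all. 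This case analysis is the real content of the equivalence and is missing from the sketch.

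There is also a smaller, fixable defect in $(\Rightarrow)$: the $\eta$ you choose only controls equal-time comparisons $d(\phi_u(\Phi^k(x)),\phi_u(\Phi^k(y)))$, whereas your piecewise-linear $h$ compares $\phi_u(\Phi^k(x))$ with $\phi_v(\Phi^k(y))$, where $|u-v|$ can be as large as the return-time mismatch $|s_{k+1}-s_k-(t_{k+1}-t_k)|$. You need first to prove, via continuity of the crossing times in the flow boxes, that this mismatch tends to $0$ with $\eta$, and then choose $\eta$ so that the equal-time error plus the displacement of the $y$-orbit over a time $|u-v|$ stays below $\delta$; as written, ``the definition of $\eta$ ensures'' this does not follow. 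You should also record that the $s_k$ are strictly increasing and tend to $\pm\infty$, so that the interpolated $h$ really belongs to $Rep$, and that $x$ and $y$ lie in a common cross-section before invoking the cross-section property to force $y=x$.
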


The constant $\eta$ above is called an expansiveness constant of $\Phi$. The next two lemmas are borrowed from \cite{ACP}. 
We denote by $\SS_{\eta}(x)$ and $\SU_{\eta}(x)$, the connected components of $W^s_{\eta}(x)$ and $W^u_{\eta}(x)$ containing $x$, respectively.

\begin{lemma}[\cite{ACP}]\label{bigstable}
     Let $H$ and $T$ be adapted families of cross-sections, $\eta>0$ an expansiveness constant for $\Phi$, and suppose $\dim(X)>1$. Then there is $x\in T$ such that either  $diam(\SS_{\eta}(x))>c>0$ or $diam(\SU_{\eta}(x))>c>0$.
\end{lemma}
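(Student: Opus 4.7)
The plan is to reduce the statement, via the first-return map $\Phi$ on the cross-section $T$, to Mañé's dimension argument for discrete expansive homeomorphisms in \cite{Ma}. By the characterization of expansiveness recalled just above the lemma, $\Phi\colon T\to T$ is an expansive homeomorphism of a compact metric space with expansiveness constant $\eta$, and I will show that $\dim(X)>1$ forces $\dim(T)\geq 1$, so the discrete-time theory applies directly.

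For the dimension reduction, the cross-section construction gives $X=\bigcup_{i=1}^{n}\phi_{[-\tau,0]}(T_i)$. By the flow box property (the defining property of the cross-sections together with the regularity of $\phi$), the map $(t,y)\mapsto\phi_t(y)$ is a continuous injection on $[-\tau,0]\times T_i$, hence a homeomorphism onto $\phi_{[-\tau,0]}(T_i)$ by compactness. The finite sum theorem for topological dimension together with the product inequality (see \cite{Hur}) then gives
\begin{equation*}
\dim(X)\leq \max_{i}\dim\bigl(\phi_{[-\tau,0]}(T_i)\bigr)\leq 1+\max_{i}\dim(T_i).
\end{equation*}
Hence $\dim(X)>1$ forces $\dim(T_{i_0})\geq 1$ for some $i_0$, and therefore $\dim(T)\geq 1$.

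For the discrete-time step, I invoke Mañé's key observation from \cite{Ma}: for an expansive homeomorphism of a compact metric space of positive topological dimension, there is a point whose local stable or local unstable set contains a non-degenerate continuum through it. Applied to $\Phi$ with expansiveness constant $\eta$, this produces $x\in T$ such that either $\SS_\eta(x)$ or $\SU_\eta(x)$ is non-degenerate, and the conclusion follows by taking $c$ to be any positive number strictly smaller than the diameter of this non-degenerate component.

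The main obstacle is verifying that Mañé's argument truly applies to $\Phi$. The first-return map can fail to be globally continuous where the return time jumps, which is precisely why the setup in the excerpt introduces the adapted cross-sections and the parameter $\eta_0$: these ensure enough local continuity of $\Phi$ and its iterates near each point of $T$. The delicate step is to check that this controlled local continuity, together with the local product structure coming from expansiveness, is sufficient to carry out Mañé's dimension-theoretic argument inside the compact set $T$.
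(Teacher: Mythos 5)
Your proposal has a genuine gap, and it sits exactly at the step you defer at the end. The reduction hinges on the claim that $\Phi\colon T\to T$ is an expansive homeomorphism of a compact metric space, but this is false in general: the first-return map to a finite union of cross-sections is not continuous (nearby orbits can hit different sections, or the same section at times that jump when a return point crosses a section boundary), and the adapted-section setup only provides the local extension property via $\eta_0$ (if $d(x,y)\leq\eta_0$ then $\Phi(y)$ is defined and lands in the interior of the next $H$-section), not a homeomorphism of $T$. Likewise, the constant $\eta$ is an expansiveness constant in the Keynes--Sears sense ($W^s_\eta(x)\cap W^u_\eta(x)=\{x\}$ for the section dynamics), which is a statement about the adapted sections, not the hypothesis of Ma\~n\'e's theorem for homeomorphisms. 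So Ma\~n\'e's result cannot be invoked as a black box, and the paragraph where you say ``the delicate step is to check that this controlled local continuity\ldots is sufficient to carry out Ma\~n\'e's dimension-theoretic argument'' is precisely the content of the lemma itself: one has to redo the continuum/hyperspace limiting argument for the discretized flow, keeping track of changing sections, non-uniform return times, and the fact that iterates of a continuum are only defined while it stays $\eta_0$-close to an orbit of $T$. This is exactly what is done in \cite{ACP} (adapting Kato's techniques from \cite{Kato93} to flows), and indeed the paper does not reprove the lemma but imports it from \cite{ACP}. Without carrying out that adaptation, your argument is a restatement of the problem rather than a proof; and even the discrete input you quote is closer to the Kato-type statement on stable/unstable continua than to what is literally needed here, so it would have to be verified in the precise form used.

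The preliminary dimension reduction is essentially sound, with two small caveats: injectivity of $(t,y)\mapsto\phi_t(y)$ on $[-\tau,0]\times T_i$ requires $\tau$ to be smaller than the minimal period of $\phi$ (this holds because a regular flow on a compact space has periods bounded away from zero, but it should be said), and the product inequality $\dim(A\times B)\leq\dim A+\dim B$ is the standard Hurewicz--Wallman upper bound, whereas \cite{Hur} is used in the paper for the lower-bound direction in Proposition \ref{dimCS}. These are fixable; the unproved transfer of the discrete expansive theory to the non-continuous section map $\Phi$ is not, as it is the heart of the matter.
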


The previous lemma tells us that if $\dim(X)>1$, then there is at least a non-trivial piece of connected local stable or unstable. On the other hand, the next lemma says that the discretized flow  will expand any non-trivial continuum  with big diameter, in controlled time.

\begin{lemma}[\cite{ACP}]\label{futuretimes}
    Let $H$ and $T$ be adapted families of cross-sections and $\eta$ an expansiveness constant for $\Phi$. There is $0<\kappa\leq \frac{\eta}{2}$ such that for every $\xi>0$ there is $N>0$ such that 
    if $C\subset T$ is a continuum with diameter $diam(C)\geq \xi$  then either $$diam(\Phi^n(C))\geq \kappa \textrm{ or } diam(\Phi^{-n}(C))\geq \kappa,$$ for every $n\geq N$.

\end{lemma}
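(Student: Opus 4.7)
My plan is to prove the lemma by contradiction, extracting a bi-infinite limit orbit of continua whose diameters are bounded by the expansivity constant $\eta$, and then contradicting the definition of expansivity for $\Phi$. Set $\kappa=\eta/2$ (reducing further if needed) and suppose the conclusion fails for some $\xi>0$: for every $N\geq 1$ there exist a continuum $C_N\subset T$ with $\diam(C_N)\geq\xi$ and an integer $n_N\geq N$ such that both $\diam(\Phi^{n_N}(C_N))<\kappa$ and $\diam(\Phi^{-n_N}(C_N))<\kappa$. Passing to a subsequence we may assume $n_N\to\infty$.

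\textbf{Building a limit orbit of continua.} I exploit the compactness of the hyperspace $\mathcal{K}(T)$ in the Hausdorff metric (Blaschke selection). Passing to a further subsequence, $C_N\to K_0$ for some continuum $K_0\subset T$ with $\diam(K_0)\geq\xi$. By a diagonal extraction, we may also arrange $\Phi^m(C_N)\to K_m$ in Hausdorff metric for every fixed $m\in\Z$, where continuity of $\Phi$ on the relevant portions of $T$ forces $\Phi(K_m)=K_{m+1}$, producing a genuine $\Phi$-orbit of continua $(K_m)_{m\in\Z}$ through $K_0$.

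\textbf{Uniform diameter bound.} The crucial step, and the principal obstacle, is to show that $\diam(K_m)\leq\eta$ for every $m\in\Z$. For each fixed $m$ and for $N$ so large that $|m|<n_N$, the continuum $\Phi^m(C_N)$ sits between the small continua $\Phi^{\pm n_N}(C_N)$ in the orbit. I plan to propagate the $<\kappa$ control at the two endpoints inward along the orbit: either by selecting $C_N$ to be a maximal-diameter continuum within a controlled family (so that the limit orbit $(K_m)$ inherits a global bound), or by a compactness argument inside the continuous hyperspace action of $\Phi$ on $\mathcal{K}(T)$, which should show that Hausdorff-limits of the collection $\{\Phi^m(C_N):|m|<n_N\}$ must lie in an $\eta$-neighborhood of $K_0$. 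Isolating this propagation is the technical heart of the proof.

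\textbf{Contradiction via expansivity.} Granting the bound $\diam(K_m)\leq\eta$ for all $m\in\Z$, choose $x,y\in K_0$ with $d(x,y)\geq\xi$. Writing $x=\lim x_N$ and $y=\lim y_N$ with $x_N,y_N\in C_N$, continuity of $\Phi^m$ at these points places $\Phi^m(x),\Phi^m(y)\in K_m$, so $d(\Phi^m(x),\Phi^m(y))\leq\diam(K_m)\leq\eta$ for every $m\in\Z$. Consequently $y\in W^s_\eta(x)\cap W^u_\eta(x)=\{x\}$ by the choice of $\eta$ as an expansivity constant, contradicting $d(x,y)\geq\xi>0$. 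As indicated, the chief difficulty is the third paragraph, where one has to pass from diameter control at the two outermost iterates $\pm n_N$ to uniform control along the whole limit orbit; the rest of the argument is a standard compactness plus Mañé-style use of expansivity.
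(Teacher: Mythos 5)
You are trying to prove a statement that the paper itself does not prove (it quotes it from \cite{ACP}), and the step you yourself flag as ``the technical heart'' is precisely where the argument breaks down, and not merely for technical reasons. From the negation you only know that the two extreme iterates $\Phi^{\pm n_N}(C_N)$ have diameter $<\kappa$ and that $\diam(C_N)\geq\xi$; this gives no control whatsoever on $\diam(\Phi^m(C_N))$ for intermediate $|m|<n_N$. In an expansive system these intermediate images are in general large -- indeed that is exactly what the lemma asserts for large $|m|$ -- so the Hausdorff limits $K_m$ of $\Phi^m(C_N)$ have no reason to satisfy $\diam(K_m)\leq\eta$, and without that bound the final appeal to $W^s_\eta(x)\cap W^u_\eta(x)=\{x\}$ never gets started. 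Neither of your two suggested repairs addresses this: choosing $C_N$ of maximal diameter in some family does not bound the diameters of its iterates, and there is no compactness principle forcing limits of the intermediate iterates to stay in an $\eta$-neighbourhood of $K_0$. There is also a secondary gap you gloss over: $\Phi$ is a first-return map to a finite union of local cross-sections, not a homeomorphism of a compact space; $\Phi^m$ is continuous, $\Phi^m(C)$ is a continuum, and Hausdorff limits commute with $\Phi^m$ only under the smallness/adaptedness conditions (the constant $\eta_0$ in the paper), i.e.\ only under the very diameter bounds you have not established, so even the existence of the limit orbit $(K_m)_{m\in\Z}$ is unjustified.

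The actual proof in \cite{ACP} (modelled on Kato's Lemma~2.3 for continuum-wise expansive homeomorphisms \cite{Kato93}) arranges matters so that the diameter bounds are hypotheses \emph{before} passing to the limit, rather than conclusions hoped for afterwards. One first proves a quantitative form of expansiveness by your kind of compactness argument, but with the bound imposed at every intermediate time: if there were continua $C_j$ with $\diam(C_j)\geq\xi$ and $\diam(\Phi^m(C_j))\leq\eta$ for all $|m|\leq j$, a Hausdorff limit would be a nondegenerate continuum with all iterates of diameter $\leq\eta$, contradicting expansiveness; hence there is $M(\xi)$ such that some iterate with $|m|\leq M(\xi)$ has diameter $>\eta$. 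Then, in the situation produced by the negation, one does not take limits of the $C_N$ themselves but recentres: one selects, say, the last time $a_N\in[0,n_N)$ at which $\diam(\Phi^{a_N}(C_N))\geq\eta$, so that the recentred continua have diameter $\geq\eta$ at time $0$ and controlled diameter on the whole segment up to time $n_N-a_N$, and only then passes to a Hausdorff limit; this manufactures nondegenerate continua that are stable (resp.\ unstable) at scale $\eta$, and a uniform-shrinking lemma for such stable continua (proved by the same limiting device) yields the contradiction, with $\kappa$ determined by these constants rather than fixed as $\eta/2$ at the outset. To make your write-up correct you should either reproduce that selection-plus-limit scheme or simply invoke \cite{ACP}, as the paper does.
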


The next tool needed is the following result, which is contained in the proof of  \cite[theorem 2.6]{Kato95}, and we extracted it in form of Lemma for our purposes.

\begin{lemma}\label{cover}
    Let $X$ be a compact metric space and $K\subset X$ be a zero-dimensional compact subset of $X$. For every $\alpha>0$ there is an open cover $\{U_1,...,U_n\}$ such that:
    \begin{enumerate}
        \item $diam(U_i)\leq \alpha$, for every $i=1,...n$
        \item $\overline{U_i}\cap \overline{U_j}=\emptyset$, whenever $i\neq j$.
        \item There is $\lambda>0$ such that  every continuum $A\subset X$ with $diam(A)>3\alpha$, there is a subcontinuum $B\subset A$ such that $diam(B)\geq \lambda$ and $B\cap \overline{U_i}=\emptyset$, for every $i=1,...,n$.
    \end{enumerate}
\end{lemma}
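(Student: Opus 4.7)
The plan is to build the cover from a disjoint clopen refinement on $K$, and to verify property (3) by first locating a point of $A$ far from $F:=\bigcup_i\overline{U_i}$ and then applying the boundary-bumping theorem. Exploiting $\dim(K)=0$, since $K$ is compact and zero-dimensional, for every $\beta>0$ it admits a finite cover by pairwise disjoint sets $V_1,\ldots,V_n$ that are clopen in $K$ and of diameter at most $\beta$ (standard clopen refinement from dimension theory: refine any open cover of mesh $\beta$ by clopen-in-$K$ sets and make them disjoint via $V_i\mapsto V_i\setminus\bigcup_{j<i}V_j$). Apply this with $\beta=\alpha/4$; since the $V_i$ are pairwise disjoint compact subsets of $X$, one has $\varepsilon_0:=\min_{i\neq j}d(V_i,V_j)>0$. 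Pick $\eta$ with $0<\eta<\min(\alpha/4,\varepsilon_0/3)$ and set $U_i:=\{x\in X:d(x,V_i)<\eta\}$. Then $\{U_1,\ldots,U_n\}$ is an open cover of $K$ with $\mathrm{diam}(U_i)\le\mathrm{diam}(V_i)+2\eta\le\alpha$, and $d(\overline{U_i},\overline{U_j})\ge\varepsilon_0-2\eta>0$, so the $\overline{U_i}$ are pairwise disjoint, giving (1) and (2).

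For (3), set $F:=\bigcup_i\overline{U_i}$ and $\rho_0:=\min_{i\neq j}d(\overline{U_i},\overline{U_j})>0$, and fix $\lambda$ with $0<\lambda<\min(\alpha,\rho_0/2)$. The key intermediate claim is that any continuum $A\subset X$ with $\mathrm{diam}(A)>3\alpha$ contains a point $p$ with $d(p,F)>\lambda$; otherwise $A\subseteq\bigcup_i\{x\in X:d(x,\overline{U_i})\le\lambda\}$, and because $\lambda<\rho_0/2$ these closed $\lambda$-neighbourhoods are pairwise disjoint, inducing a finite clopen partition of $A$ into which the connectedness of $A$ forces it to collapse, giving $\mathrm{diam}(A)\le\alpha+2\lambda<3\alpha$, a contradiction. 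Given such $p$, set $W:=A\cap B(p,\lambda)$: this is a non-empty open subset of $A$, which is proper because $\mathrm{diam}(A)>2\lambda$ forces some point of $A$ to lie outside $B(p,\lambda)$. Let $C$ be the component of $p$ in $W$; the boundary-bumping theorem applied to the continuum $A$ and the open set $W$ yields $q\in\overline{C}\cap\partial_A W$, and since $\partial_A W\subseteq\{x\in A:d(x,p)=\lambda\}$ we obtain $d(p,q)=\lambda$, so $\mathrm{diam}(\overline{C})\ge\lambda$. Finally $d(p,F)>\lambda$ gives $\overline{B(p,\lambda)}\cap F=\emptyset$, and since $\overline{C}\subseteq\overline{B(p,\lambda)}$, the subcontinuum $B:=\overline{C}$ satisfies $\mathrm{diam}(B)\ge\lambda$ and $B\cap\overline{U_i}=\emptyset$ for every $i$, establishing (3).

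The three ingredients --- the zero-dimensional clopen refinement, boundary bumping for continua, and the arithmetic choice $\lambda<\min(\alpha,\rho_0/2)$ --- are classical. I expect no essential obstacle; the only delicate point is balancing the constants so that the clopen-partition contradiction $\alpha+2\lambda<3\alpha$ and the disjointness of the closed $\lambda$-neighbourhoods of the $\overline{U_i}$ hold simultaneously, which is exactly what the stated bound on $\lambda$ ensures.
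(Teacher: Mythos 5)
Your proof is correct, and it is worth noting that the paper itself contains no argument for this lemma: it is stated as extracted from the proof of Theorem 2.6 in \cite{Kato95}, so there is no in-paper proof to match. What you have written is a clean, self-contained reconstruction built from exactly the classical ingredients behind Kato's argument: a finite partition of the zero-dimensional compactum $K$ into small pairwise disjoint sets clopen in $K$, fattened by $\eta<\min(\alpha/4,\varepsilon_0/3)$ to get (1) and (2); the observation that a connected set contained in the pairwise disjoint closed $\lambda$-neighbourhoods of the $\overline{U_i}$ must lie in a single one, which for $\lambda<\min(\alpha,\rho_0/2)$ forces $\diam(A)\le\alpha+2\lambda<3\alpha$ and hence yields a point $p\in A$ with $\dist(p,F)>\lambda$; and the boundary bumping theorem (see \cite{Nadler}, already in the bibliography) applied to the component of $p$ in $A\cap B(p,\lambda)$, whose closure is a subcontinuum of $A$ of diameter at least $\lambda$ contained in $\overline{B(p,\lambda)}$, hence disjoint from every $\overline{U_i}$. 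The constant bookkeeping is consistent, and $\lambda$ depends only on the cover, not on $A$, as the statement requires. The only cosmetic gaps are degenerate cases: discard any empty pieces produced by the disjointification, and when $n\le 1$ the quantity $\rho_0$ is undefined but the same argument with $\lambda<\alpha$ alone gives the contradiction $\diam(A)<3\alpha$; neither affects the substance.
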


The next result is the last ingredient needed for the proof of Theorem \ref{minimalsets}.

\begin{theorem}\label{one-dim}
    Let $\phi$ be an expansive flow on $X$ with $\dim(X)>1$. If $K\subset X$ is one-dimensional, compact, and invariant, then there is a minimal subset disjoint from $K$.
\end{theorem}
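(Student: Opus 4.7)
The plan is to discretize the flow using the cross-section map $\Phi$ on $T$ and to construct a point $y \in T$ whose $\omega_{\Phi}$-limit is bounded away from $K \cap T$; the corresponding $\omega_{\phi}(y)$ will then be a compact invariant set disjoint from $K$, providing the desired minimal subset. By Proposition \ref{dimCS}, $K \cap T$ is a compact zero-dimensional subset of $X$. Fix $\alpha > 0$ with $3\alpha < \kappa$, where $\kappa$ is the constant from Lemma \ref{futuretimes}, and apply Lemma \ref{cover} to $K \cap T$ in $X$ to obtain an open cover $\{U_1,\dots,U_m\}$ with pairwise disjoint closures of diameters at most $\alpha$ and a constant $\lambda > 0$. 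Let $F = \bigcup_i \overline{U_i}$; since $K \cap T$ is compact and contained in the open set $\bigcup_i U_i$, there exists $\epsilon > 0$ such that $F$ contains the closed $\epsilon$-neighborhood of $K \cap T$ in $X$.

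The hypothesis $\dim(X) > 1$ enters through Lemma \ref{bigstable}: there is $x \in T$ with either $\diam(\SS_\eta(x)) > c$ or $\diam(\SU_\eta(x)) > c$ for some uniform $c$; after reducing $\alpha$ so that $c > 3\alpha$ and reversing time if necessary, I assume $\diam(\SU_\eta(x)) > c$. Applying Lemma \ref{cover} to the continuum $\SU_\eta(x)$ yields a subcontinuum $B_0 \subset \SU_\eta(x)$ with $\diam(B_0) \geq \lambda$ and $B_0 \cap F = \emptyset$. I then inductively build a nested sequence $B_0 \supset B_1 \supset B_2 \supset \cdots$ of continua in $T$ and integers $0 = n_0 < n_1 < n_2 < \cdots$ satisfying $\diam(\Phi^{n_k}(B_k)) \geq \lambda$ and $\Phi^{n_k}(B_k) \cap F = \emptyset$. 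The inductive step applies Lemma \ref{futuretimes} to the continuum $\Phi^{n_k}(B_k)$, furnishing $m \geq N(\lambda)$ such that $\diam(\Phi^{n_k + m}(B_k)) \geq \kappa > 3\alpha$; Lemma \ref{cover} applied to this large continuum extracts a subcontinuum $B' \subset \Phi^{n_k + m}(B_k)$ with $\diam(B') \geq \lambda$ and $B' \cap F = \emptyset$. Setting $n_{k+1} = n_k + m$ and $B_{k+1} = \Phi^{-n_{k+1}}(B')$, which lies in $B_k$ because $\Phi$ is a bijection of $T$, completes the step.

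By nested compactness, $\bigcap_k B_k \neq \emptyset$; pick $y$ in this intersection. Then each $\Phi^{n_k}(y) \in \Phi^{n_k}(B_k) \subset X \setminus F$, hence $d(\Phi^{n_k}(y), K \cap T) > \epsilon$ for every $k$, and since $n_k \to \infty$, every accumulation point along this sequence, and in particular every point of $\omega_\Phi(y)$ it produces, is at distance at least $\epsilon$ from $K \cap T$. Thus $\omega_\Phi(y) \cap (K \cap T) = \emptyset$. Because $K$ is $\phi$-invariant and $T$ meets every orbit, any point of $\omega_\phi(y) \cap K$ would translate under the flow into $\omega_\phi(y) \cap T \cap K = \omega_\Phi(y) \cap K = \emptyset$; hence $\omega_\phi(y) \cap K = \emptyset$. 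The set $\omega_\phi(y)$ is then a non-empty compact $\phi$-invariant subset of $X$ disjoint from $K$, and any minimal subset of it is the sought-after minimal set.

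The main technical point is guaranteeing, at each stage of the induction, that the \emph{forward} option of the dichotomy in Lemma \ref{futuretimes} is the one available for $\Phi^{n_k}(B_k)$. Here I exploit $B_k \subset \SU_\eta(x)$: backward iterates $\Phi^{-j}(B_k)$ are trapped in local unstable sets and hence bounded, so if forward iterates of a nondegenerate continuum in $\SU_\eta(x)$ stayed below $\kappa$ indefinitely, the continuum would be contained in the intersection of a local stable set with $\SU_\eta(x)$ and collapse to a point by expansivity of $\Phi$. This forces the forward option in Lemma \ref{futuretimes} and drives the construction; if this cleanly fails at any stage, the symmetric argument with the backward orbit (using $\SS_\eta$) produces an $\alpha$-limit disjoint from $K$ instead, which yields the desired minimal set just as well.
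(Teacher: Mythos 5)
Your overall architecture is the same as the paper's (Proposition \ref{dimCS} to make $K\cap T$ zero-dimensional, Lemma \ref{cover} for the cover, Lemma \ref{bigstable} for a large $\SU_\eta$, and an iterated use of Lemma \ref{futuretimes} plus Lemma \ref{cover}(3) to build nested continua avoiding $\bigcup_i\overline{U_i}$), and your explicit nesting of the $B_k$ is in fact cleaner than the paper's write-up. But the endgame has a genuine gap. From $d(\Phi^{n_k}(y),K\cap T)>\epsilon$ you conclude ``thus $\omega_\Phi(y)\cap(K\cap T)=\emptyset$'', and then $\omega_\phi(y)\cap K=\emptyset$. This does not follow: avoidance along the sparse sequence $(n_k)$ only shows that $\omega_\Phi(y)$ \emph{contains} points far from $K\cap T$; limits along other subsequences (the iterates at intermediate times, which your construction does not control) may well lie in $K\cap T$, so $\omega_\phi(y)$ may meet $K$. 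The set of accumulation points along $(n_k)$ is disjoint from $K$ but is not invariant, so you cannot extract a minimal set from it. The paper closes exactly this gap by working with a \emph{fixed} step $N$ (times $jN$), and then using the $\Phi$-invariance of $K\cap T$ together with uniform continuity of $\Phi,\dots,\Phi^{N}$ near $K\cap T$ to upgrade avoidance at the times $jN$ to a uniform bound $d(\Phi^i(x),K)\geq\gamma$ for \emph{all} $i\geq 0$, and then, using that flow times between consecutive $\Phi$-iterates are at most $\tau$, to $d(\overline{\orb^+(x)},K)>0$; only then is $\omega_\phi(x)$ a compact invariant set disjoint from $K$, which contains the desired minimal set. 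Note also that your increments $m=m_k\geq N(\lambda)$ are allowed to be unbounded, which would defeat even this continuity argument; you should fix the step (e.g.\ take $m=N$ at every stage), which your construction permits.

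A secondary, smaller point: your justification for forcing the forward branch of Lemma \ref{futuretimes} (``backward iterates are trapped in local unstable sets, so otherwise the continuum lies in a stable set intersected with $\SU_\eta(x)$ and collapses'') is not airtight as stated: the continuum at stage $k$ sits at time $n_k$, its backward iterates for the intermediate times $0<j<n_k$ are \emph{forward} iterates of a subcontinuum of $\SU_\eta(x)$ and are not controlled, and the crude bound $\diam\leq 2\eta$ on the deep past does not contradict $\diam\geq\kappa$ since $\kappa\leq\eta/2$. The clean statement to invoke (and what the paper is implicitly using from \cite{ACP}) is that nondegenerate subcontinua of local unstable sets have forward iterates of diameter at least $\kappa$ for all sufficiently large times; with that lemma cited, this part of your argument is fine.
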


\begin{proof}
Let $T$ and $S$ be a pair of adapted cross-sections for $\phi$. Let $\Phi$ be the discretized flow of $\phi$ and let $\eta$ be a constant of expansiveness for $\Phi$.   Since $\dim(X)>1$, there is $p\in T$ such that either $$\diam(\SS_{\eta}(p))>c \textrm{ or } \diam(\SU_{\eta}(p))>c.$$  Let us assume $\diam(\SU_{\eta}(p))\geq c$, otherwise we just reproduce the next argument to the map  $\Phi^{-1}$. Denote $K_0=T\cap K$ and observe that $K_0\neq \emptyset$ due to the invariance of $K$. Moreover, $K_0$ is zero-dimensional because  of Proposition \ref{dimCS}. 

Fix $$0<\kappa\leq\min\left\{\frac{\eta}{2},c\right\} \textrm{ and fix } 0<\alpha\leq\frac{\kappa}{3}.$$ Let $\{U_1,...,U_i\}$ be an open cover of $K_0$ in $T$  
 be given by Lemma \ref{cover}. Fix  $\lambda>0$ as in Lemma \ref{cover} and choose $0<\xi\leq \lambda$.

Since $\diam(\SU_{\eta}(p))\geq c$, there is a continuum $C\subset \SU_{\eps}(p)$ with $$\diam(C)\geq \lambda \geq \xi \textrm{ and } C\cap \overline{U_i}=\emptyset,$$ for $i=1,...,n$. Hence, the fact that $C\subset \SU_{\eps}(p)$ implies, by Lemma \ref{futuretimes}, the existence of  $N>0$ such that 
$$\diam(\Phi^k(C))\geq \kappa,$$ for every $k\geq N$. 
So, we can find a continuum $B_1\subset \Phi^{N}(C)$, such that  $$\diam(B_1)\geq \xi,$$ and $B_1\cap \overline{U_{i}}=\emptyset$, for every $i=1,...,n$. 

Inductively, for every  $j\geq 1$, there is a continuum $B_J\subset \Phi^{jN}(C)$, such that $$\diam(B_j)\geq\xi,$$ and $B_1\cap \overline{U_{i}}=\emptyset$ for every $i=1,...,n$. Consequently, there is $x\in C$, such that $$d(\Phi^{jN}(x), U_{i})\geq\frac{\lambda}{2},$$ for every $j\geq 0$ and every $i=1,...,n$. Now, by the continuity of $\Phi$, it is easy to see that there is $\gamma>0$ such that $d(\Phi^i(x),K)\geq \gamma$, for every $i\geq 0$. In particular, $d(\overline{orb^+(x)},K)>0$ and therefore $\omega(x)$ contains a minimal set disjoint from $K$. 
\end{proof}

\begin{proof}[Proof of Theorem \ref{minimalsets}]
Suppose $X$ has at least dimension two and let $\phi$ be an expansive flow on $X$ without periodic orbits. By Theorem \ref{minexp}, $\phi$ cannot be minimal. So  $X$ contains a minimal set, which is one-dimensional, again by Theorem \ref{minexp}. To prove that we actually have infinitely minimal subsets, suppose $X$ contains only finitely many subsets and denote those sets by $K_1,..., K_n$. Since $K_i$, is one dimensional for every $i=1,...,n$, then $$K=\bigcup_{i=1}^n K_1 $$
is also one-dimensional. Thus, Theorem \ref{one-dim} implies the existence of a minimal set $K'$ disjoint from $K$, a contradiction. This concludes the proof.
\end{proof}

\section{Proof of Theorem \ref{charac}}
\label{secCharac}
This section is devoted to providing the reader with a proof for Theorem \ref{charac} and exploring some of its consequences. Remember that our goal in Theorem \ref{charac} is to obtain a characterization of expansiveness through the assumption of expansiveness on $X'=X\setminus Orb(x)$, where $x$ is any point of $X$. Observe that the set is not necessarily compact, and this represents the biggest difficulty here. To deal with this issue, 
we begin by choosing a field of cross-sections $H$ with diameter $\eps_0>0$ and time $\tau>0$. Next, for each $0<2\eps\leq\eps_0$ define the \textit{dynamical ball} of $x$ with respect to  $H$ as
\[
\Gamma_\epsilon(x)=\{y\in H_\epsilon(x):\exists h\in Rep \text{ s.t. } 
\phi_{h(t)}(y)\in H_\epsilon(\phi_t(x)), \forall t\in\R\}.
\]
The next result is based on the proof of \cite{ACCV}*{Proposition 2.3} and is an important ingredient for our proof.

\begin{lemma}
\label{lemNoExpOrb}
If the restriction of $\phi$ to the orbit of $x$ is not expansive then 
$\Gamma_\epsilon(x)$ is uncountable for all $\epsilon>0$.
\end{lemma}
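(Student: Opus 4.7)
The plan is to mine the failure of expansivity on $\orb(x)$ for a rich family of shadowing reparametrizations of $\orb(x)$, then project each one onto the cross-section $H_\epsilon(x)$ via the flow-box structure to produce an uncountable subset of $\Gamma_\epsilon(x)$. This follows the strategy of \cite{ACCV}*{Proposition 2.3}, adapted to the cross-section framework developed in the paper.

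First I would unpack the hypothesis. The failure of expansivity for $\phi|_{\orb(x)}$ yields an $\alpha>0$ such that, for each $n\geq 1$, one can find $a_n,b_n\in\R$ with $|a_n-b_n|>\alpha$ and $h_n\in Rep$ satisfying $d(\phi_{a_n+t}(x),\phi_{b_n+h_n(t)}(x))\leq 1/n$ for every $t\in\R$. Setting $\rho_n(s):=b_n+h_n(s-a_n)$ produces an increasing homeomorphism of $\R$ with $d(\phi_s(x),\phi_{\rho_n(s)}(x))\leq 1/n$ for every $s$ and $|\rho_n(a_n)-a_n|>\alpha$. This is the self-shadowing data of the orbit at arbitrarily fine scale and with a uniformly nontrivial displacement that I will exploit.

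Next I would build candidate points of $\Gamma_\epsilon(x)$. For $n$ large, the point $\phi_{\rho_n(0)}(x)$ lies in the flow-box $\phi_{[-\tau,\tau]}(H_\epsilon(x))$, so I can project along the flow to obtain $y_n\in H_\epsilon(x)\cap\orb(x)$; and by continuously projecting $\phi_{\rho_n(t)}(x)$ onto $H_\epsilon(\phi_t(x))$ for every $t$ via the field $H$ (exactly the construction used to define the partial flow $\psi$ on $N_\epsilon$), one produces a reparametrization $\widetilde h_n\in Rep$ witnessing $y_n\in\Gamma_\epsilon(x)$. Ruling out periodicity of $x$ (which would make $\phi|_{\orb(x)}$ expansive) guarantees $y_n\neq x$. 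Moreover, Lemma \ref{lemConvUnifParam} shows that the defining condition of $\Gamma_\epsilon(x)$ is closed under uniform limits of $(y_n,\widetilde h_n)$, so any accumulation point of the sequence lies in $\Gamma_\epsilon(x)$ as well.

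Finally, to upgrade from ``infinitely many'' to ``uncountably many'' I would use a Cantor-coding scheme. Because the self-shadowing is available at arbitrarily fine scales $\delta_k\to 0$, I will extract a sequence of disjoint time windows $I_k=[T_k,T_k+L_k]$ on each of which the orbit is shadowed by a displaced copy of itself, with flow-time displacement exceeding $\alpha$. For each $\sigma\in\{0,1\}^\N$ I would glue together the choices ``continue the current piece'' ($\sigma_k=0$) versus ``switch to the displaced shadow'' ($\sigma_k=1$) on each $I_k$ into a global increasing homeomorphism $\rho^\sigma$ of $\R$ satisfying $d(\phi_s(x),\phi_{\rho^\sigma(s)}(x))\leq \sup_k\delta_k$, then project as above to obtain $y^\sigma\in\Gamma_\epsilon(x)$.

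The main obstacle will be the injectivity of $\sigma\mapsto y^\sigma$: I need to show that two coding sequences that disagree at some window $I_k$ produce different projected points on $H_\epsilon(x)$. Without the catenary estimates available for fully expansive flows, this must come from a purely geometric argument: the uniqueness of the partial-flow reparametrization \cite{ArFCS}*{Lemma 3.1} together with uniform transversality of the field $H$ along $\orb(x)$ should force a single flow-time displacement of size $>\alpha$ to leave a visible trace on the cross-section that cannot be cancelled by subsequent choices. Provided this quantitative separation can be established, the uncountability of $\{0,1\}^\N$ transfers to $\Gamma_\epsilon(x)$, completing the proof.
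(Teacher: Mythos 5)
There is a genuine gap, and it sits exactly where you flag ``the main obstacle'': the injectivity of $\sigma\mapsto y^\sigma$ is not merely delicate, it is impossible, because every point your scheme produces lies on $\orb(x)$. Each $y^\sigma$ is obtained by projecting $\phi_{\rho^\sigma(0)}(x)$ into the section, so $y^\sigma\in H_\epsilon(x)\cap\orb(x)$; but this set is countable, since $H_\epsilon(x)$ is a cross-section of positive time and hence the hitting times $\{t\in\R:\phi_t(x)\in H_\epsilon(x)\}$ are uniformly separated (this is precisely the observation used in Case 3 of the proof of Theorem \ref{charac}). Consequently no injection of $\{0,1\}^{\N}$ into your family can exist, and no quantitative transversality or uniqueness-of-reparametrization argument can repair this. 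The failure is also visible structurally: since the orbit of $y^\sigma$ is the orbit of $x$ time-shifted, both the point $y^\sigma$ and its membership in $\Gamma_\epsilon(x)$ depend only on the single real number $\rho^\sigma(0)$; switchings performed in windows $I_k$ lying in the far future or past leave $y^\sigma$ unchanged, so the binary coding is simply not recorded in the point. (Your first two paragraphs, by contrast, are essentially sound: non-expansivity on the orbit forces $x$ to be non-periodic and yields self-shadowing data, and projecting along the field of cross-sections does produce points of $\Gamma_\epsilon(x)\cap\orb(x)$.)

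The uncountably many points must therefore come from \emph{outside} the orbit, as accumulation points, and you already hold the needed ingredient when you invoke Lemma \ref{lemConvUnifParam} to say that the defining condition of $\Gamma_\epsilon(x)$ passes to limits. This is exactly how the paper proceeds: it defines a nested countable family $A_n\subset H_\epsilon(x)\cap\orb(x)$ of orbit points shadowing one another at scales $\epsilon/2^{n}$, uses the non-expansivity of $\phi|_{\orb(x)}$ (applied at every orbit point, not only at $x$) to show that no point of $A_n$ is isolated in $A_{n+1}$, and then observes that $B=\bigcup_{n\geq 1}A_n$ has no isolated points, so its closure is a nonempty perfect compact set, hence uncountable, and this closure is contained in $\Gamma_\epsilon(x)$ by the very closedness property you noted. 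In short, the upgrade from ``infinitely many'' to ``uncountably many'' should be a perfect-set/closure argument on limits of orbit points, not a Cantor coding of reparametrizations of the single orbit of $x$; rewriting your third and fourth paragraphs along these lines turns your outline into the paper's proof.
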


\begin{proof}
Given $\epsilon>0$
define the subsets $A_n\subset H_\epsilon(x)$ as 
$A_1=\Gamma_{\epsilon/2}(x)\cap \orb(x)$ and 
\[
\begin{array}{rl}
A_{n+1}&=\{y\in H_\epsilon(x)\cap\orb(x):\exists z\in A_n \text{ and } g,h\in Rep \text{ s.t. }\\
& \phi_{g(t)}(y),\phi_{h(t)}(z)\in H_\epsilon(\phi_t(x))\text{ and }\dist(\phi_{g(t)}(y),\phi_{h(t)}(z))\leq\epsilon/2^{n+1}, \forall t\in\R\}.
\end{array}
\]
Since the flow is not expansive on the orbit of $x$, we have that no $y\in\orb(x)$ is isolated in $\Gamma_r(y)$, for all $r>0$.
This implies that each point in $A_n$ is not isolated in $A_{n+1}$.
Therefore, the set $B=\cup_{n\geq 1} A_n$ has no isolated point and its closure is contained in $\Gamma_\epsilon(x)$.
As the closure of $B$ is uncountable, the proof ends.
\end{proof}

Now we are in a position to prove Theorem \ref{charac}.

\begin{proof}[Proof of Theorem \ref{charac}]

Suppose that $\phi$ is a continuous regular flow on $X$ and fix $x_1\in X$. Denote $$X'=  X\setminus \orb(x_1).$$ 
If $\phi$ is expansive, then $\phi$ is obviously expansive on $X'$. Hence we just need to prove the converse.

Suppose the restriction of $\phi|_{X'}$ is expansive. 
We will show that $\phi$ is expansive on $X$.
Fix $\delta_0,\tau_0>0$  such that for every $0<\delta\leq \delta_0$ and $0<\eps\leq \tau_0$, there is a field of cross-sections  with diameter $\delta$ and time $\eps$. 
Fix $0<\eps\leq\tau_0$ and let $0<2\delta_1\leq \delta_0$ be given by the expansiveness of $\phi$ on $X'$.  Let $H:X\to \mathcal{K}(X)$ be field of cross sections for $\phi$ with diameter $2\delta_1$ and time $\eps$.

To conclude that $\phi$ is expansive, we must find an adequate constant $\delta>0$ such that if $x,y\in X$, $h\in Rep$ and $$d(\phi_t(x),\phi_{h(t)}(y))\leq \delta,$$ for every $t\in \R$, then $y\in \phi_{[-\eps,\eps]}(x)$. This $\delta$ will then be taken as the minimum between three constants $\delta_1,\delta_2,\delta_3$, which will be defined depending on the points $x,y$ are located at: \\

\textbf{Case 1:}  Suppose $x,y\in X'$. In this case, the expansiveness of  $\phi$ in $X'$ implies that if $d(\phi_t(x),\phi_{h(t)}(y))\leq \delta_1$, for every $t\in \R$, then $y\in \phi_{[-\eps,\eps]}(x)$.
\\

\textbf{Case 2:}  Now, suppose $x\in X'$ and $y\in Orb(x_1)$. To obtain a proof in this case,  we first need the following claim:
\begin{claim}
    There is at most on point $z\in X'\cap H(x_1)$, so that there is $h\in Rep$ satisfying $d(\phi_t(x_1),\phi_{h(t)}(z))\leq \frac{\delta_1}{2}$, for every $t\in \R$.
\end{claim}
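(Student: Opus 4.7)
The plan is to reduce the claim to a direct application of the expansiveness of $\phi|_{X'}$ combined with the cross-section property of $H(x_1)$. The key observation is that $X'=X\setminus\orb(x_1)$ is $\phi$-invariant, so the entire orbit of any point $z\in X'$ stays in $X'$; in particular, the hypothesis of expansiveness on $X'$ can be tested against any pair of candidate points $z_1,z_2\in X'$, even though $X'$ itself is not compact.

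Suppose $z_1,z_2\in X'\cap H(x_1)$ both satisfy the condition, with reparametrizations $h_1,h_2\in Rep$ such that $d(\phi_t(x_1),\phi_{h_i(t)}(z_i))\le \delta_1/2$ for every $t\in\R$ and $i=1,2$. The first step is to apply the triangle inequality to obtain
\[
d(\phi_{h_1(t)}(z_1),\phi_{h_2(t)}(z_2))\le \delta_1
\]
for every $t\in\R$. This is the point where the half-bound $\delta_1/2$ in the statement is crucial: adding two such contributions produces exactly $\delta_1$, which fits comfortably inside the expansiveness constant $2\delta_1$ chosen at the beginning of the proof.

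The next step is to rewrite the estimate in the form required by Definition \ref{Def: exp}. Setting $s=h_1(t)$ and $\tilde h=h_2\circ h_1^{-1}$, the composition $\tilde h$ is again an increasing homeomorphism of $\R$ fixing the origin, so $\tilde h\in Rep$, and the inequality above becomes
\[
d(\phi_s(z_1),\phi_{\tilde h(s)}(z_2))\le \delta_1\le 2\delta_1
\]
for every $s\in\R$. Applying the expansiveness of $\phi|_{X'}$ to the pair $z_1,z_2\in X'$ with reparametrization $\tilde h$ yields $z_2\in \phi_{[-\eps,\eps]}(z_1)$. Since $z_1,z_2\in H(x_1)$ and $H(x_1)$ is a cross-section of time $\eps$, the defining property $\phi_{[-\eps,\eps]}(z_1)\cap H(x_1)=\{z_1\}$ forces $z_1=z_2$, which proves the claim.

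I do not anticipate any real obstacle in this argument: the entire proof is a short chain of the triangle inequality, the closure of $Rep$ under composition and inversion, and the transversality property of cross-sections. The only detail that a reader might want to verify is that $\tilde h=h_2\circ h_1^{-1}$ belongs to $Rep$, but this is purely formal since $Rep$ is a group under composition.
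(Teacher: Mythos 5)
Your proof is correct and follows essentially the same route as the paper: triangle inequality to get a $\delta_1$-bound between the two candidate points, an explicit reparametrization $\tilde h=h_2\circ h_1^{-1}\in Rep$ (which the paper only asserts implicitly), expansiveness of $\phi|_{X'}$, and the cross-section property of $H(x_1)$ to force the two points to coincide. Your added remarks on the invariance of $X'$ and on why $Rep$ is closed under composition and inversion are harmless clarifications of steps the paper leaves tacit.
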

\begin{proof}
 Otherwise, there should be $z',z\in H(x_1)\cap X'$, $z'\neq z$ and $h,g\in Rep$ such that 
 $$d(\phi_t(x_1),\phi_{h(t)}(z))\leq \frac{\delta_1}{2} \textrm{ and } d(\phi_t(x_1),\phi_{g(t)}(z'))\leq \frac{\delta_1}{2},$$
 for every $t\in R$.  In this case, we obtain
$$d(\phi_{h(t)}(x),\phi_{g(t)}(y))\leq \delta_1,$$
for every $t\in \R$. Therefore, there is $h'\in Rep$ so that $$d(\phi_{t}(x),\phi_{h'(t)}(y))\leq \delta_1,$$
for every $t\in \R$. By assumption $\phi$ is expansive on $X'$ and hence $z'\in \phi_{[-\eps,\eps]}(z)$, but this impossible because $H(x_1)$ is a cross-section of time $\eps$ and $z\neq z'$. 
 \end{proof}

Let $z$ be given by the claim, and  consider $$0<\delta_2\leq\frac{1}{2}\min\left\{d(\phi_{[-\eps,\eps]}(z), x_1),\delta_1\right\}.$$ 
Now, suppose  $$d(\phi_{t}(x),\phi_{h'(t)}(y))\leq \delta_2,$$
for every $t\in \R$. Since $y\in Orb(x_1)$, there is $t_0\in \R$ so that $\phi_{h(t_0)}(y)=x_1$. By writing $x_0=\phi_{t_0}(x)$ we can obtain $g\in Rep$ so that $$d(\phi_{t}(x_0),\phi_{g(t)}(x_1))\leq \delta_2,$$
for every $t\in \R$. But the claim implies $x_0\in \phi_{[-\eps,\eps]}(H(x_1))$ and by the choice of $\delta_2$ we must have $x_0\in \phi_{[-\eps,\eps]}(x)$, contradicting the assumption $x\in X'$.\\

\textbf{Case 3:} Suppose  $x,y\in Orb(x_1)$. First, we claim that $H(x_1)\cap Orb(x_1)$ is countable. Indeed, since $H(x_1)$ is a cross-section of time $\eps$, there if $t\neq s$ are such that $$\phi_{t}(x_1),\phi_{s}(x_1)\in H(x_1),$$ then $|t-s|>\eps$. Consequently, every point in the set $$A=\{t\in \R;\phi_t(x_1)\in H(x_1) \},$$
 isolated and hence $H(x_1)\cap Orb(x_1)$ is countable. 
 
 Now, arguing by contradiction, we can apply Lemma \ref{lemNoExpOrb} to obtain that $\Gamma_r(x_1)$ is uncountable for all $r>0$. Thus, taking two points in $\Gamma_r(x_1)\setminus\orb(x_1)$ we contradict the expansivity in $X'$. Consequently, there is a constant of expansivity $\delta_3>0$ for $\phi$ in $Orb(x_1)$. Finally, by choosing $0<\delta\leq \min\{\delta_1,\delta_2,\delta_3\}$, we conclude that $\phi$ is expansive.

\end{proof}

Suppose that $\phi$ and $\psi$ are flows on the same compact space $X$, $\phi$ is regular and $\psi$ has just one singular point $p\in X$. We say that $\psi$ \textit{adds a singular point} to $\phi$, or $\phi$ \textit{removes a singular point} from $\psi$, if every orbit of $\psi$ is contained in an orbit of $\phi$ with the same flow direction. 
In this case, $\phi_\R(x)=\psi_\R(x)$ for all $x\notin \phi_\R(p)$.

For the next result we need \cite{ArExpFSurf}*{Theorem 1.3} which states that 
a flow $\psi$ on the compact space $X$ is $k^*$-expansive if and only if 
for all $\beta>0$ there is $\delta>0$ such that 
if $x,y\in X$, $h\in Rep$ and $d(\phi_{h(t)}(x),\phi_t(y))<\delta$ for all $t\in\R$ then 
$x,y$ are in an orbit segment of diameter less than $\beta$.

\begin{corollary}
Suppose that $\phi$ is a regular flow of the compact space $X$ and $\psi$ adds a singular point $p\in X$ to $\phi$. 
Then, $\phi$ is expansive if and only if $\psi$ is $k^*$-expansive.
\end{corollary}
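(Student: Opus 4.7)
The plan is to establish both directions by combining Theorem \ref{charac} with the $k^*$-expansivity characterization quoted just before the Corollary from \cite{ArExpFSurf}*{Theorem 1.3}. The structural fact driving the proof is that $\phi$ and $\psi$ share the same orbits on $X' = X \setminus \orb(p)$, up to a continuous time reparametrization, since adding a singular point at $p$ only breaks the $\phi$-orbit through $p$ into pieces (the $\psi$-stable and $\psi$-unstable branches of $p$, together with $\{p\}$) and leaves every other orbit intact.

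For the direction ``$\phi$ expansive $\Rightarrow$ $\psi$ is $k^*$-expansive'', I fix $\beta > 0$ and, using uniform continuity of $\phi$ on the compact space $X$, choose $\eps > 0$ with $\diam(\phi_{[-\eps,\eps]}(z)) < \beta$ for every $z \in X$; let $\delta_\phi > 0$ be the expansivity constant of $\phi$ associated with $\eps$. For $x, y \in X'$, I transport a $\psi$-tracking inequality $d(\psi_{h(t)}(x), \psi_t(y)) < \delta$ into a $\phi$-tracking inequality through the reparametrizations on the respective orbits, so that expansivity of $\phi$ yields $y \in \phi_{[-\eps,\eps]}(x)$, which is a common orbit segment of $\psi$-diameter less than $\beta$. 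The critical case is $x = p$ (the case $y = p$ is symmetric): then $\psi_{h(t)}(x) \equiv p$ and the tracking forces the entire $\psi$-orbit of $y$ to lie in $\overline{B_\delta(p)}$. If $y \notin \orb(p)$, then $\overline{\orb(y)}$ is a nontrivial compact $\phi$-invariant subset of $\overline{B_\delta(p)}$ and therefore contains a $\phi$-minimal set inside this ball; since $\phi$ is regular and $p$ is not $\phi$-fixed, choosing $\delta$ small enough rules this out. The remaining sub-case $x, y \in \orb(p) \setminus \{p\}$ is handled by direct inspection, since both points lie on the $\psi$-stable or $\psi$-unstable branch of $p$ and the tracking places them in a common orbit segment of small diameter.

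For the direction ``$\psi$ is $k^*$-expansive $\Rightarrow$ $\phi$ is expansive'', Theorem \ref{charac} reduces the task to showing that $\phi$ is expansive on $X' = X \setminus \orb(p)$ (applied with $x_1 = p$). Given $\eps > 0$, I first choose $\beta > 0$ small enough that any pair of points lying in a common $\phi$-orbit segment of diameter less than $\beta$ with both endpoints in $X'$ satisfies $y \in \phi_{[-\eps,\eps]}(x)$; this can be arranged using compactness together with the fact that the reparametrization from $\psi$ to $\phi$ is uniformly controlled on neighborhoods kept at positive distance from $\orb(p)$. Let $\delta' > 0$ be given by the $k^*$-expansivity characterization applied to $\psi$ at scale $\beta$. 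For $x, y \in X'$ and $h \in Rep$ with $d(\phi_t(x), \phi_{h(t)}(y)) < \delta$ (for $\delta$ sufficiently small in terms of $\delta'$), I reparametrize to a $\psi$-tracking inequality and apply the characterization of $k^*$-expansivity to place $x, y$ on a common $\psi$-orbit segment of diameter less than $\beta$. Since $X'$ is $\psi$-invariant and on $X'$ the $\psi$-orbits coincide with the $\phi$-orbits, this is a $\phi$-orbit segment of diameter less than $\beta$, and the choice of $\beta$ gives $y \in \phi_{[-\eps,\eps]}(x)$.

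The main obstacle is the behavior at or near the orbit of $p$: in $(\Rightarrow)$, ruling out the tracking between $p$ and points outside $\orb(p)$ for small $\delta$, which requires the minimality argument applied to $\phi$-orbits trapped in a small ball around $p$; and in $(\Leftarrow)$, the conversion between $\psi$-orbit segments of small diameter and $\phi$-orbit segments of small $\phi$-time, since the reparametrization between the two flows degenerates as one approaches $\orb(p)$. Both issues are absorbed by restricting to $X'$ via Theorem \ref{charac}, which is exactly what makes the Corollary reachable from that result.
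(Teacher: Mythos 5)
Your proof of the converse implication ($\psi$ $k^*$-expansive $\Rightarrow$ $\phi$ expansive) is essentially the paper's own argument: reduce via Theorem \ref{charac} (with $x_1=p$) to expansivity of $\phi$ on $X\setminus\orb_\phi(p)$, transport the $\phi$-tracking to a $\psi$-tracking, invoke the characterization of $k^*$-expansivity from \cite{ArExpFSurf}*{Theorem 1.3}, and convert a small-diameter orbit segment into a small-time one. One simplification: the choice of $\beta$ needs no ``uniform control of the reparametrization away from $\orb(p)$''; all that is used (and all the paper uses) is that for a regular flow on a compact space there are uniform constants so that $\diam(\phi_{[0,s]}(z))<\beta$ forces $s<\eps$. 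The orbit segment joining $x,y\in X\setminus\orb_\phi(p)$ never meets $\orb_\phi(p)$ anyway, since distinct orbits are disjoint, so no degeneration issue arises.

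For the direct implication the paper simply cites \cite{ArExpFSurf}*{Proposition 6}, while you attempt a proof; as written it has a genuine gap. Your cases $x,y\notin\orb_\phi(p)$ and $x=p$, $y\notin\orb_\phi(p)$ are fine, provided you make the exclusion argument uniform: regularity plus compactness give $T,\rho>0$ with $\diam(\phi_{[0,T]}(z))\geq\rho$ for every $z$, so no nonempty compact $\phi$-invariant set fits inside a ball of radius $\rho/2$; ``a minimal set near a non-fixed point is impossible'' is not by itself a reason. The gap is the sub-case where both points lie on $\orb_\phi(p)\setminus\{p\}$ on the same branch: there you assert that ``the tracking places them in a common orbit segment of small diameter'', which is precisely the conclusion to be proved, and it is not direct, because the $\psi$-orbit of such a point covers only half of the $\phi$-orbit of $p$, so a $\psi$-tracking over all $t\in\R$ does not immediately yield a $\phi$-tracking over all of $\R$ to which expansivity of $\phi$ applies. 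A fix: writing $x=\phi_a(p)$, $y=\phi_b(p)$ with $a,b>0$ on the unstable branch, the tracking gives an increasing homeomorphism $v$ of $(0,\infty)$ with $v(b)=a$ and $d(\phi_{v(s)}(p),\phi_s(p))\leq\delta$ for all $s>0$; since both moving points tend to $p$ in backward $\psi$-time, $v$ extends by the identity on $(-\infty,0]$ to an increasing homeomorphism of $\R$, the bound persists trivially on the added half-line, and after normalizing one obtains $g\in Rep$ with $d(\phi_u(x),\phi_{g(u)}(y))\leq\delta$ for all $u\in\R$; now expansivity of $\phi$ gives $y\in\phi_{[-\eps,\eps]}(x)$, and the segment between $x$ and $y$ lies in the branch, hence is a $\psi$-orbit segment of diameter less than $\beta$. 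You also omit the mixed sub-case $x=p$, $y\in\orb_\phi(p)\setminus\{p\}$, which is ruled out for small $\delta$ by the same uniform estimate, since then $\omega_\phi(y)$ or $\alpha_\phi(y)$ would be a compact invariant set inside $\overline{B_\delta(p)}$. With these points supplied your forward direction goes through; alternatively, it can simply be quoted from \cite{ArExpFSurf}*{Proposition 6}, as the paper does.
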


\begin{proof}
The proof of the direct part was given in \cite{ArExpFSurf}*{Proposition 6}. 
To prove the converse suppose that $\psi$ is $k^*$-expansive. 
We will show that the regular flow $\phi$ is expansive by applying Theorem \ref{charac}. 
Thus we need to prove that $\phi$ is expansive in $X\setminus \orb_{\phi}(p)$.
Given $\epsilon>0$, as $\phi$ is regular we can take $\beta>0$ such that 
if 
\begin{equation}
\label{ecuBeta}
\diam(\phi_{[0,s]}(z))<\beta\text{ then }s<\epsilon.
\end{equation}
The $k^*$-expansivity of $\psi$ and \cite{ArExpFSurf}*{Theorem 1.3} gives us the corresponding expansivity constant $\delta>0$.

Suppose that $x,y\notin\orb_\phi(p)$ and 
$h\in Rep$ such that 
$d(\phi_t(x),\phi_{h(t)}(y))\leq \delta,$ for every $t\in \R$.
This implies that there is $g\in Rep$ such that 
$d(\psi_t(x),\psi_{g(t)}(y))\leq \delta,$ for every $t\in \R$.
Thus, $x$ and $y$ are in an orbit segment of diameter less than $\beta$.
From \eqref{ecuBeta} we have that $y=\phi_s(x)$ for some $|s|<\epsilon$.
This proves that $\phi$ is expansive in the complement of the orbit of $p$ and from Theorem \ref{charac} we conclude that $\phi$ is expansive.
\end{proof}

\vspace{0.1in}

\section*{Acknowledgements}

The authors thank the valuable comments from the anonymous referee, which helped to improve the presentation of this work.   The second author is grateful to the Southern University of Science and Technology Department of Mathematics, where part of this work was developed. The first author is partially supported by Agencia Nacional de Investigaci\'on e Inovaci\'on (ANII) and Programa de Desarrollo de las Ciencias B\'asicas (PEDECIBA).
\begin{table}[h]
\begin{tabularx}{\linewidth}{p{1.5cm}  X}
\includegraphics [width=1.4cm]{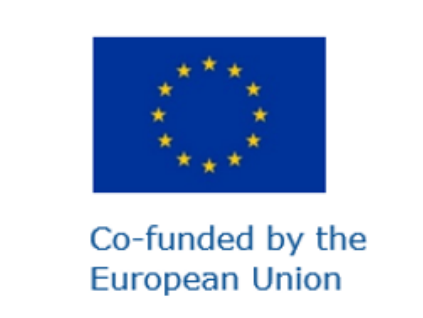} &
\vspace{-1.3cm}
This research is part of a project that has received funding from
the European Union's European Research Council Marie Sklodowska-Curie Project No. 101151716 -- TMSHADS -- HORIZON--MSCA--2023--PF--01.\\
\end{tabularx}
\end{table}

\begin{bibdiv}
\begin{biblist}

\bib{AH}{book}{
   author={N. Aoki},
   author={K. Hiraide},
   title={Topological theory of dynamical systems},
   series={North-Holland Mathematical Library},
   volume={52},
   note={Recent advances},
   publisher={North-Holland Publishing Co., Amsterdam},
   date={1994},
   pages={viii+416},
   isbn={0-444-89917-0},
   review={\MR{1289410}},
}
\bib{ACP}{article}{
   author={A. Arbieto},
   author={W. Cordeiro},
   author={M.J. Pacifico},
   title={Continuum-wise expansivity and entropy for flows},
   journal={Ergodic Theory Dynam. Systems},
   volume={39},
   date={2019},
   number={5},
   pages={1190--1210}
}

\bib{ArExpFSurf}{article}{
author={A. Artigue},
title={Expansive flows of surfaces},
journal={Discrete and Continuous Dynamical Systems},
year={2013},
volume={33},
pages={505-525},
doi={10.3934/dcds.2013.33.505}}

\bib{ArMin}{article}{
	author = {A. Artigue},
	title = {Minimal expansive systems and spiral points},
	journal={Topology and its applications},
	volume={194},
	year={2015},
	pages={166--170}}% 

\bib{ArFCS}{article}{
author={A. Artigue}, 
title={Discrete and continuous topological dynamics: Fields of cross sections and expansive flows}, 
journal={Discrete and Continuous Dynamical Systems}, 
volume={36}, 
year={2016}, 
pages={5911-5927}}

\bib{ArCatenary}{article}{
author={A. Artigue},
title={Isolated sets, catenary Lyapunov functions and expansive systems},
journal={Topological Methods in Nonlinear Analysis},
year={2017}}

\bib{ACCV}{article}{
author={A. Artigue},
author={B. Carvalho},
author={W. Cordeiro},
author={J. Vieitez},
title={Countably and entropy expansive homeomorphisms with the shadowing property},
journal={Proceedings of the AMS},
year={2022},
pages={3369-3378}}

\bib{BW}{article}{
   author={R. Bowen},
   author={P. Walters},
   title={Expansive one-parameter flows},
   journal={J. Differential Equations},
   volume={12},
   date={1972},
   pages={180--193},
   issn={0022-0396},
   review={\MR{0341451}},
   doi={10.1016/0022-0396(72)90013-7},
}

\bib{CP}{article}{
   author={Choi, Sung Kyu},
   author={Park, Jong Suh},
   title={Hyperbolic flows are topologically stable},
   journal={Bull. Austral. Math. Soc.},
   volume={43},
   date={1991},
   number={2},
   pages={225--232},
   issn={0004-9727},
   review={\MR{1097061}},
   doi={10.1017/S0004972700028987},
}

\bib{Hur}{article}{
   author={W. Hurewicz},
   title={Sur la dimension des produits cartesiens},
   language={French},
   journal={Ann. of Math. (2)},
   volume={36},
   date={1935},
   number={1},
   pages={194--197},
   issn={0003-486X},
   review={\MR{1503218}},
   doi={10.2307/1968674},
}

\bib{Kato93}{article}{
	author={H. Kato},
	title={Continuum-wise expansive homeomorphisms},
	journal={Canad. J. Math.},
	volume={45},
	number={3},
	year={1993},
	pages={576--598}}

\bib{Kato93b}{article}{
author={H. Kato},
title={Concerning continuum-wise fully expansive homeomorphisms of continua},
journal={Topology and its Applications},
volume={53},
year={1993},
pages={239--258}}

\bib{Kato95}{article}{
author={H. Kato}, 
title={Minimal sets and chaos in the sense of Devaney on continuum-wise expansive homeomorphisms},
journal={Lecture Notes in Pure and Appl. Math.}, 
volume={170}, 
year={1995}, 
pages={265--274}}
	
\bib{Kato96}{article}{
author={H. Kato},
title={Chaos of continuum-wise expansive homeomorphisms and dynamical properties of sensitive maps of graphs},
journal={Pacific Journal of Mathematics},
volume={175}, 
number={1}, 
year={1996}}
	
\bib{Kato03}{article}{
    author={H. Kato},
    title={Infinite Minimal Sets Of Continuum-Wise Expansive Homeomorphisms Of 1-Dimensional Compacta},
    journal={Topology and its Applications},
    volume={130},
    year={2003},
    pages={57--64}}

\bib{KS}{article}{
author = {H.B. Keynes},
author={M. Sears},
title = {Real-expansive flows and topological dimension},
journal = {Ergodic Theory and Dynamical Systems},
volume = {1},
year = {1981},
pages = {179--195}}

% \bib{Ma78}{article}{
% title={Contributions to the stability conjecture},
% author={R. Mañé},
% journal={Topology},
% volume={17},
% pages={383--396},
% year={1978}}

\bib{Ma}{article}{
	author={R. Mañé},
	title={Expansive homeomorphisms and topological dimension},
	journal={Trans. of the AMS}, 
	volume={252}, 
	pages={313--319}, 
	year={1979}}

\bib{MoSi}{book}{
   author={C.A. Morales},
   author={V.F. Sirvent},
   title={Expansive measures},
   series={Publica\c c\~oes Matem\'aticas do IMPA. [IMPA Mathematical
   Publications]},
   note={29$\sp {\rm o}$ Col\'oquio Brasileiro de Matem\'atica. [29th
   Brazilian Mathematics Colloquium]},
   publisher={Instituto Nacional de Matem\'atica Pura e Aplicada (IMPA), Rio
   de Janeiro},
   date={2013},
   pages={viii+89},
   isbn={978-85-244-0360-6},
   review={\MR{3100422}},
}

\bib{Nadler}{book}{
	author={S. Nadler Jr.}, 
	title={Continuum Theory}, 
	series={Pure and Applied Mathematics}, 
	volume={158}, 
	publisher={Marcel Dekker, New York},
	year={1992}}

\bib{Pa}{article}{
   author={M. Paternain},
   title={Expansive geodesic flows on surfaces},
   journal={Ergodic Theory Dynam. Systems},
   volume={13},
   date={1993},
   number={1},
   pages={153--165},
   issn={0143-3857},
   review={\MR{1213085}},
   doi={10.1017/S0143385700007264},
}

\bib{U}{article}{
   author={W.R. Utz},
   title={Unstable homeomorphisms},
   journal={Proc. Amer. Math. Soc.},
   volume={1},
   date={1950},
   pages={769--774},
   issn={0002-9939},
   review={\MR{0038022}},
   doi={10.2307/2031982},
}

\bib{Walters}{book}{
author={P. Walters},
title={An Introduction to Ergodic Theory},
publisher={Springer-Verlag New York, Inc.},
year={1982}}

\bib{Wi}{article}{
   author={R.K. Williams},
   title={On expansive homeomorphisms},
   journal={Amer. Math. Monthly},
   volume={76},
   date={1969},
   pages={176--178},
   issn={0002-9890},
   review={\MR{0238278}},
   doi={10.2307/2317269},
}

\end{biblist}
\end{bibdiv}

\end{document}